\definecolor{brown}{rgb}{0.59, 0.29, 0.0}
\definecolor{BROWN}{rgb}{0.59, 0.29, 0.0}
\renewcommand{\le}{\leqslant}
\renewcommand{\ge}{\geqslant}
\newcommand{\eps}{\varepsilon}
\newcommand{\norm}[1]{\|#1\|}
\newcommand{\E}{\operatorname{E}}
\newcommand{\1}{\mathds{1}}
\newcommand{\dto}{\rightsquigarrow}
\newcommand{\boundary}{\partial}
\newcommand{\defn}{\emph}
\newcommand{\abs}[1]{\lvert#1\rvert}
\newcommand{\Borel}{\mathcal{B}}
\newcommand{\RR}{\mathbb{R}}
\newcommand{\NN}{{\mathbb{N}}}
\newcommand{\law}{{\cal L}}
\newcommand{\Pareto}{\operatorname{Pareto}}
\newcommand{\ZZ}{\mathbb{Z}}
\newcommand{\reals}{\mathbb{R}}
\newcommand{\point}{\,\cdot\,}
\newcommand{\RV}{\mathcal{R}}
\renewcommand{\Pr}{\operatorname{Pr}}
\newcommand{\hypo}{\operatorname{hypo}}
\newcommand{\domain}{\mathbb{D}}
\newcommand{\USC}{\operatorname{USC}}
\begin{document}

\title{ Polar decomposition of regularly varying time series in star-shaped metric spaces}

\titlerunning{Regularly varying time series}        

\author{Johan Segers \and Yuwei Zhao \and Thomas Meinguet\thanks{The
    first author's research is supported by contract nr.\ 07/12-045 of
    the Projet d'Actions de Recherche Concert\'ees of the Communaut\'e
    fran\c{c}aise de Belgique, granted by the Acad\'emie universitaire
    Louvain and the second author's research is supported by IAP research network grant nr.\ P7/06 of the Belgian government (Belgian Science Policy).}}

\authorrunning{J. Segers, Y. Zhao and T. Meinguet} 

\institute{Johan Segers \at
              Universit\'{e} catholique de Louvain \\
              ISBA, Voie du Roman Pays 20\\
            B-1348 Louvain-la-Neuve, Belgium\\
\email{johan.segers@uclouvain.be}  
           \and
            Yuwei Zhao \at
Universit\'{e} catholique de Louvain \\
              ISBA, Voie du Roman Pays 20\\
            B-1348 Louvain-la-Neuve, Belgium\\
\email{yuwei.zhao@uclouvain.be}
\and
Thomas Meinguet \at
ING Bank N.V.\\Postbus 1800\\1000 BV Amsterdam\\Netherlands\\
\email{thomas.meinguet@skynet.be}
}

\date{Received: date / Accepted: date}

\maketitle

\begin{abstract}
There exist two ways of defining regular variation of a time series in
a star-shaped metric space: either by the distributions of finite
stretches of the series or by viewing the whole series as a single
random element in a sequence space. The two definitions are shown to
be equivalent. The introduction of a norm-like function, called
\emph{modulus}, yields a polar decomposition similar to
the one in Euclidean spaces. The angular component of the time series, called \emph{angular} or \emph{spectral tail process}, captures all aspects of extremal dependence. The stationarity of the underlying series induces a transformation formula of the spectral tail process under time shifts.
\keywords{extremal index \and extremogram \and regular variation \and spectral tail process \and stationarity \and tail dependence \and time series}
\end{abstract}

\section{Introduction}
The concept of regular variation plays an important role in the study of heavy-tailed phenomena, which appear in diverse contexts such as financial risk management, telecommunications, and meteorology, to name a few. Traditionally, regular variation has been defined and studied for univariate functions and random variables in $\mathbb{R}$, see for instance \citet{BIN87} and \citet{RES87} and the references therein. Later on, it has been extended to random vectors and stochastic processes~\citep{RES86, HL05, RES07}. \cite{BS09} study the polar decomposition of a regular varying time series and \cite{HL06}, by introducing the $M_0$-convergence, build a framework to define regular variation for measures on metric spaces endowed with scalar multiplication. Combining results and methods in these two papers, \cite{SER10} provide a detailed study of regularly varying time series in Banach spaces. Our aim is to extend and generalize results in the latter concerning two aspects: regular variation of the time series when seen as a single random element in a sequence space and the polar decomposition in star-shaped metric spaces.

Let $\bm{X}=(X_t)_{t\in \mathbb{Z}}$ be a discrete-time stochastic process taking values in a star-shaped metric space $S$, i.e., a complete, separable metric space equipped with a scalar multiplication (see Section~\ref{sec:space}). Regular variation of random elements in such spaces has been introduced in \cite{HL06}, generalizing theory in \citet{kuelbs+m:1974} and \citet{mandrekar+z:1980} for regular variation in Hilbert and Banach spaces, respectively; see also \citet{gine+h+v:1990} and~\citet{DHL01} for regular variation of random continuous functions. Regular variation of a time series $\bm{X}$ can be defined via its finite-dimensional distributions, that is, $(X_{-m},\ldots,X_m)$ is regularly varying as a random element in $S^{2m+1}$ for each $m\in \mathbb{Z}_+=\{0,1,2,\ldots\}$. Alternatively, $\bm{X}$ can be required to be regularly varying as a random element in the sequence space $S^{\mathbb{Z}}$. In \cite{Owada:2013vu}, it is shown that, under mild  conditions, these two ways of defining regular variation of a real-valued stochastic process are equivalent. As one of the paper's aims, the equivalence is shown for $\bm{X}$ taking values in a general metric space.  

The polar decomposition of stationary regularly varying time series in Euclidean spaces is introduced by \cite{BS09} and generalized to Banach spaces by \cite{SER10}. Let $\mathbb{B}$ be a Banach space equipped with a norm $\norm{\cdot}$.  Regular variation of a $\mathbb{B}$-valued stationary time series $\bm{X}$ is equivalent to the existence of the limit in distribution of 
\begin{equation*}
  \Big(\norm{X_0}/u, (X_t/\norm{X_0})_{t \in \mathbb{Z}}\Big) \text{ conditionally on } \norm{X_0} > u \text{ as } u \to \infty,
\end{equation*}
where the limit of $\norm{X_0}/u$ given $\norm{X_0} > u$ is assumed to be non-degenerate. 
This leads to a natural decomposition of the limit process into independent modular and angular components. The modular component, the limit in distribution of $\norm{X_0}/u$ given $\norm{X_0} > u$ as $u\to \infty$, is fully determined by the index of regular variation, $\alpha$, of the random variable $\norm{X_0}$, while the angular component, the limit in distribution of $(X_t/\norm{X_0})_{t \in \mathbb{Z}}$ given $\norm{X_0}>u$, captures all aspects of extremal dependence. The angular component is called \defn{spectral tail process}. Stationarity of $\bm{X}$ induces a transformation formula for the spectral tail process under time shifts. The spectral tail process provides a single apparatus to describe a large variety of objects describing extremal dependence: the extremal index~\citep{LB83}, the cluster index \citep{mikosch+w:2014}, the extremogram~\citep{DM09, MZ15}, limits of sums or maxima \citep{BKS12, meinguet:2012}, and Markov tail chains~\citep{Janssen+S:2014, DSM15}.

A general metric space may not possess a norm. However, an alternative
function possessing some key properties of a norm, named \emph{modulus}, might still exist. If this is the case, then the above polar decomposition still goes through, and the time-change formula for the spectral tail process is shown to be preserved.


The structure of the paper is as follows. The conditions on the metric
space and the definition of a modulus function are introduced in
Section~\ref{sec:space}. In Section~\ref{sec:rv}, the polar decomposition of a regularly varying random element in a metric
space is studied. Regular variation of a time series seen as a random element in a sequence space is investigated in Section~\ref{sec:regul-vary-stoch}. Results on the
spectral tail process and on the time-change formula are given in
Sections~\ref{sec:spectral-processes} and~\ref{sec:time-change-formula}, respectively.
Section~\ref{sec:disc} provides some brief discussion in connection to hidden regular variation and Appendix~\ref{sec:m_0-convergence} contains auxiliary results on convergence of measures.

\section{Star-shaped metric spaces}
\label{sec:space}

Let $(S, d)$ be a complete, separable metric space and let $0_S \in S$ be a point in $S$ called `origin'. (To avoid trivialities, assume that $S$ is not equal to $\{0_S\}$.) To define regular variation of measures on the metric space $S$, \cite{HL06} assume that $S$ is equipped with a scalar multiplication. The following is a formal definition of such a multiplication. In the cited paper, conditions (i) and (ii) are not mentioned explicitly.

\begin{definition}
\label{def:mult}
A \emph{scalar multiplication} on $S$ is a map $[0, \infty) \times S \to S : (\lambda, x) \to \lambda x$ satisfying the following properties:
\begin{enumerate}[(i)] 
\item $\lambda_1 (\lambda_2 x) = (\lambda_1 \lambda_2) x$ for all $\lambda_1, \lambda_2 \in [0, \infty)$ and $x \in S$;
\item $1 x = x$ for $x \in S$;
\item the map is continuous with respect to the product topology;
\item if $x \in S_0=S\setminus \{0_S\}$ and if $0 \le \lambda_1 < \lambda_2$, then $d(\lambda_1x, 0_S) < d(\lambda_2x, 0_S)$.
\end{enumerate}
\end{definition}

Let $x \in S_0$. For any
$\lambda \in [0, \infty)$, we have $\lambda (0x) = (\lambda 0) x = 0x$
by (i) in Definition~\ref{def:mult}. It follows that $d(\lambda_1(0x),
0_S) = d(0x, 0_S) = d(\lambda_2(0x), 0_S)$ for all $\lambda_1,
\lambda_2 \in [0, \infty)$. By (iv), it can therefore not be true that
$0x \in S_0$. We find that $0x = 0_S$ for all $x \in S$. In addition, we necessarily have $\lambda 0_S = 0_S$ for all $\lambda \in [0, \infty)$; indeed, by the property just established, we have $\lambda 0_S = \lambda (0 \, 0_S) = (\lambda 0) 0_S = 0 \, 0_S = 0_S$.

We think of $S$ as `star-shaped' with rays emanating from the
origin. Alternatively, think of $S$ as a kind of cone. We will
sometimes write $x/\lambda := \lambda^{-1} x$ for $\lambda > 0$ and $x
\in S$. 

The distance function $x \mapsto d(x, 0_S)$ need not be
homogeneous. This will be important in
Section~\ref{sec:regul-vary-stoch}, where we will consider metrics on
sequence spaces inducing the product topology. To decompose a point in
$S_0$ in a `modular' component and an `angular' component, a modulus function needs to be present. The following definition captures the properties needed on such a function.

\begin{definition}
\label{def:radius}
A function $\rho : S \to [0, \infty)$ is a \emph{{modulus}} if it satisfies the following properties:
\begin{enumerate}[(i)]
\item 
  $\rho$ is continuous;
\item 
  $\rho$ is homogeneous: $\rho(\lambda x) = \lambda \, \rho(x)$ for $\lambda \in [0, \infty)$ and $x \in S$;
\item 
  for every $\eps > 0$, we have $\inf \{ \rho(x) : d(x, 0_S) > \eps \} > 0$. 
\end{enumerate}
\end{definition}

Since $\lambda 0_S = 0_S$ for all $\lambda \in [0, \infty)$, homogeneity implies $\rho(0_S) = 0$. The third
condition on the modulus $\rho$ will be needed to ensure that every
subset of $S_0$ which is bounded away from the origin is 
included in a set of the form $\{ x : \rho(x) \ge \delta \}$ for some
$\delta > 0$. In particular, $\rho(x) > 0$ for $x \neq 0_S$. Therefore,
$x = 0_S$ if and only if $\rho(x)=0$. Furthermore, the third condition implies that there
exist positive scalars $(z_r)_{r>0}$ such that $\lim_{r\downarrow 0}z_r=0$ and $\{x:\rho(x)<r\}\subset
\{ x:d(x,0_S)<z_r\}$ for every $r>0$. Since $\rho(0_S) = 0$ and since $\rho$ is continuous, the collection of sets $\{ x: \rho(x) < r \}$, for $r > 0$, therefore forms an open neighbourhood base for $0_S\in S$. 

We think of $\rho(x)$ as the `modulus' of $x$. We further define the
`angle' of $x \in S_0$ as $\theta(x) = \rho(x)^{-1} x$. Note that
$\rho(\theta(x)) = 1$, that is, $\theta(x) \in \{ \theta \in S :
\rho(\theta) = 1 \} =: \aleph$, the `unit sphere' of $S$. Clearly, $x
= \rho(x) \, \theta(x)$ for $x \in S_0$. The map
\[
  T : S_0 \to (0, \infty) \times \aleph : x \mapsto T(x) = (\rho(x), \theta(x))
\]
is the polar decomposition.

\begin{example}\label{ex:111}
In case the function $x \mapsto d(x, 0_S)$ is itself homogeneous, it is a modulus as in Definition~\ref{def:radius}. This is the case, for instance, if $S$ is a Banach space and the distance is the one induced by the norm, which brings us back to the set-up in \cite{SER10}. Another example is the Skorohod space $D = D([0, 1], \mathbb{R}^d)$ of c\`adl\`ag functions $[0, 1] \to \mathbb{R}^d$ equipped with the $J_1$-metric: in that case, the zero element $0_D$ is the zero function, and the Skorohod distance of $x \in D$ to $0_D$ is given by $d(x, 0_D) = \sup_{t \in [0, 1]} \norm{ x(t) }$. Regular variation of $D$-valued random elements was considered in \cite{HL05}.
\end{example}

\begin{example}
Assume that, for all $\eps > 0$, there exists $\delta > 0$ such that $\{ x : d(\delta^{-1} x, 0_S) \le 1 \} \subset \{ x : d(x, 0_S) \le \eps \}$. Then it can be shown that the map $\rho : S \to [0, \infty)$ defined by
\[
  \rho(x) = 
  \begin{cases}
    \inf \{ \lambda \in (0, \infty) : d(\lambda^{-1} x, 0_S) \le 1 \} & \text{if $x \neq 0_S$,} \\
    0 & \text{if $x = 0_S$.}
  \end{cases}
\]
is a modulus as in Definition~\ref{def:radius}. Intuitively, the condition on the metric $d$ is that scalar multiplication increases distances to the origin in a uniform way.
\end{example}

\begin{example}
Let $\domain$ be a nonempty compact subset of some Euclidean space and let $S = \USC_+(\domain)$ be the space of upper semicontinuous functions $x : \domain \to [0, \infty)$. Each such function $x$ is identified with its \emph{hypograph}, i.e., the set $\hypo x = \{ (\alpha, s) \in \reals \times \domain : \alpha \le x(s) \}$, a closed subset of $\reals \times \domain$. The \emph{hypo-topology} on $\USC_+(\domain)$ is the one induced by the Fell hit-and-miss topology on the space of closed subsets of $\reals \times \domain$; see \citet[Section~5.3]{molchanov:2005} for the dual theory of epi-convergence of lower semicontinuous functions. The map $\rho(x) = \sup_{s \in \domain} x(s)$, for $x \in \USC_+(\domain)$, then defines a modulus on $\USC_+(\domain)$.
\end{example}

If $S$ is locally compact, then condition~(iii) in Definition~\ref{def:radius} may be relaxed to the seemingly weaker assumption that $\rho(x) > 0$ for all $x \neq 0_S$. In general, however, the latter condition does not imply (iii); see Example~\ref{ex:Hilbert}. See also Section~\ref{sec:disc} for a discussion on condition~(iii).

\begin{example}
\label{ex:Hilbert}
Let $\mathbb{H}$ be a separable, infinite-dimensional Hilbert space, the metric being the one induced by the scalar product. Let $e_1, e_2, \ldots$ be an orthonormal basis in $\mathbb{H}$, and define $\rho(x) = (\sum_{i \ge 1} \lambda_i \abs{ \langle x, e_i \rangle }^2)^{1/2}$, where $(\lambda_i)_{i \ge 1}$ is a positive scalar sequence such that $\lambda_i \to 0$ as $i \to \infty$. Then $\rho$ satisfies conditions~(i) and~(ii) in Definition~\ref{def:radius}, and $\rho(x) > 0$ as soon as $x \neq 0_{\mathbb{H}}$. Still, condition (iii) in Definition~\ref{def:radius} is not satisfied, since $\rho(e_i) \to 0$ as $i \to \infty$ while $d(e_i, 0_{\mathbb{H}}) = 1$ for every $i \ge 1$.
\end{example}

\section{Regular variation and the polar decomposition}
\label{sec:rv}

Let $(S,d)$ be a complete, separable metric space equipped with an origin $0_S \in S$ and a scalar multiplication (Definition~\ref{def:mult}). Let $\mathcal{B}(S)$ denote the Borel $\sigma$-field on $S$ and let $M_0(S)$ be the space of Borel measures on $S_0 = S \setminus \{ 0_S \}$ that are bounded on complements on neighbourhoods of the origin. Let $\mathcal{C}_0$ denote the collection of bounded and continuous functions $f:S_0 \to \mathbb{R}$ for which there exists $r>0$ such that $f$ vanishes on $B_{0,r}=\{x\in S: d(x,0_S)<r\}$. The convergence of measures $\mu_n\to \mu$ in $M_0(S)$ holds as said in \cite{HL06} if and only if $\int f\,d\mu_n\to \int f\, d\mu$ for all $f\in \mathcal{C}_0$. Versions of the Portmanteau and continuous mapping theorems for $M_0$-convergence are stated as Theorems~2.4 and~2.5, respectively, in \cite{HL06}. 

For $\tau \in \reals$, let $\RV_\tau$ denote the class of regularly varying functions at infinity with index $\tau$, i.e., positive, measurable functions $g$ defined in a neighbourhood of infinity such that $\lim_{u \to \infty} g(\lambda u) / g(u) = \lambda^\tau$ for every $\lambda \in (0, \infty)$.

\begin{definition}[\citet{HL06}]
A random element $X$ in $S$ is \emph{regularly varying} with index $\alpha \in (0, \infty)$ if and only if there exists a function $V \in \RV_{-\alpha}$ and a nonzero measure $\mu \in M_0(S)$ such that
\[
  \frac{1}{V(u)} \Pr[ u^{-1} X \in \point ]
  \to
  \mu( \point ), \qquad u \to \infty.
\]
\end{definition}

The measure $\mu$ must be homogeneous: $\mu(\lambda \point) = \lambda^{-\alpha} \, \mu(\point)$ for every $\lambda \in (0, \infty)$ \citep[Theorem~3.1]{HL06}. 

\bgroup
Let $\rho$ be a modulus on $S$ (Definition~\ref{def:radius}). Our aim is now to extend to the present set-up the familiar decomposition of a regularly varying random vector into a regularly varying `modulus' and an asymptotically independent `angle'. First, we need a preliminary result linking the auxiliary function $V$ to the tail function $u \mapsto \Pr[\rho(X) > u]$.

\begin{lemma}
\label{lem:RV:aux}
Let $X$ be a regularly varying random element in $S$ with index $\alpha$ and limit measure $\mu$. If $\rho$ is a modulus on $S$, then $\mu( \{ x \in S : \rho(x) = \lambda \} ) = 0$ for every $\lambda \in (0, \infty)$ and
\[
  \lim_{u \to \infty} \frac{1}{V(u)} \Pr[ \rho(X) > u ] = \mu( \{ x \in S : \rho(x) > 1 \} ) \in (0, \infty).
\]
\end{lemma}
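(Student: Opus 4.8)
The plan is to prove the two assertions in turn, deducing the stated limit from the assumed $M_0$-convergence once the relevant set has been identified as a $\mu$-continuity set that is bounded away from the origin. Throughout I write $g_u(\point) = V(u)^{-1}\Pr[u^{-1}X\in\point]$, so that $g_u\to\mu$ in $M_0(S)$ by hypothesis.

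First I would settle the level-set statement. Put $A_\lambda=\{x\in S:\rho(x)=\lambda\}$ for $\lambda>0$. Homogeneity of $\rho$ gives $\lambda A_1=A_\lambda$, so the homogeneity of $\mu$ (with index $-\alpha$, \citet[Theorem~3.1]{HL06}) yields $\mu(A_\lambda)=\lambda^{-\alpha}\,\mu(A_1)$. The sets $A_\lambda$ are pairwise disjoint, and for any fixed $r>0$ all those with $\lambda\ge r$ are contained in $\{x:\rho(x)\ge r\}$, which has finite $\mu$-measure since it is the complement of a neighbourhood of $0_S$ and $\mu\in M_0(S)$. A finite measure admits at most countably many pairwise disjoint sets of positive measure, so not every $A_\lambda$ with $\lambda\ge r$ can have positive measure; in view of $\mu(A_\lambda)=\lambda^{-\alpha}\mu(A_1)$ this forces $\mu(A_1)=0$, and hence $\mu(A_\lambda)=0$ for all $\lambda>0$.

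Next I would recast the tail of $\rho(X)$ in scale-free form. By homogeneity of $\rho$, the event $\{\rho(X)>u\}$ equals $\{\rho(u^{-1}X)>1\}$, i.e.\ $\{u^{-1}X\in B\}$ with $B=\{x\in S:\rho(x)>1\}$. Thus $V(u)^{-1}\Pr[\rho(X)>u]=g_u(B)$, and it remains to verify that $B$ meets the hypotheses of the Portmanteau theorem for $M_0$-convergence (Theorem~2.4 in \citet{HL06}). The set $B$ is bounded away from the origin: since $\rho$ is continuous and $\rho(0_S)=0<1$, the open set $\{\rho<1\}$ contains a metric ball $B_{0,\eps}$ about $0_S$, so $B\subset\{\rho\ge 1\}\subset\{x:d(x,0_S)\ge\eps\}$. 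It is also a $\mu$-continuity set: continuity of $\rho$ gives $\boundary B\subset\{x:\rho(x)=1\}$, which is $\mu$-null by the first part. The Portmanteau theorem therefore gives $g_u(B)\to\mu(B)$, which is the asserted limit.

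It remains to check $\mu(B)\in(0,\infty)$. Finiteness is immediate because $B$ is bounded away from the origin. For positivity, homogeneity of $\mu$ gives $\mu(\{\rho>\lambda\})=\lambda^{-\alpha}\mu(B)$; if $\mu(B)$ were $0$, then $\mu(\{\rho>1/n\})=0$ for every $n$, and letting $n\to\infty$ along the increasing sets $\{\rho>1/n\}\uparrow S_0$ would give $\mu(S_0)=0$, contradicting that $\mu$ is nonzero. I expect the only genuinely delicate point to be the first step: one must rule out positive mass on \emph{every} sphere $\{\rho=\lambda\}$, not merely on all but countably many, and it is precisely the homogeneity relation $\mu(A_\lambda)=\lambda^{-\alpha}\mu(A_1)$ that converts the countability constraint into the clean dichotomy $\mu(A_1)\in\{0,\infty\}\cap[0,\infty)=\{0\}$. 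The remaining verifications (bounded away from the origin, continuity set, finiteness and positivity) are then routine.
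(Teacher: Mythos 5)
Your proof is correct and takes essentially the same route as the paper's: homogeneity of $\mu$ together with disjointness of the level sets $\{ x : \rho(x) = \lambda \}$ (all contained in a set of finite $\mu$-measure) forces $\mu(\{ x : \rho(x)=1 \}) = 0$, and then the Portmanteau theorem for $M_0$-convergence applied to the $\mu$-continuity set $\{ x : \rho(x) > 1 \}$, which is bounded away from the origin, yields the limit, with positivity obtained exactly as in the paper from $S_0 = \bigcup_k \{ x : \rho(x) > k^{-1} \}$ and homogeneity. The only differences are cosmetic: you spell out the uncountability/disjointness argument in more detail and correctly note the inclusion $\boundary \{ x : \rho(x)>1 \} \subset \{ x : \rho(x)=1 \}$ where the paper asserts equality.
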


\begin{proof}
We have $\{ x : \rho(x) = \lambda \} = \{ \lambda x : \rho(x) = 1 \}$ for $\lambda\in (0,\infty)$. The set $\{ x : \rho(x)= \lambda \}$ is closed due to the continuity of $\rho$ and does not contain the origin. Hence $\mu( \{ x : \rho(x) = \lambda \}) =\lambda^{-\alpha} \, \mu( \{ x : \rho(x) = 1 \})$ must be finite. Since the sets $\{ x : \rho(x) = \lambda \}$ are disjoint for different $\lambda$, we conclude that $\mu( \{ x : \rho(x) = \lambda\} ) = 0$ for all $\lambda \in (0, \infty)$. 

The set $\{ x : \rho(x) = 1 \}$ is the boundary of the open set of $\{ x : \rho(x) > 1 \}$. The latter is thus a $\mu$-continuity set, and its closure, $\{ x : \rho(x) \ge 1 \}$, does not contain the origin, so that $\mu(\{ x : \rho(x) \ge 1 \}) < \infty$. We obtain
\begin{align*}
  \frac{1}{V(u)} \Pr[ \rho(X) > u ]
  &= \frac{1}{V(u)} \Pr[ \rho(u^{-1} X) > 1 ] \\
  &\to \mu ( \{ x : \rho(x) > 1 \} ) = \mu( \{ x : \rho(x) \ge 1 \} ), \qquad u \to \infty.
\end{align*}
The latter quantity must be nonzero: indeed, $\mu$ is nonzero and we have $S_0 = \bigcup_{k \ge 1} \{ x : \rho(x) > k^{-1} \}$ and $\mu(\{x : \rho(x) > k^{-1}\}) = \mu(k^{-1} \{ x : \rho(x) > 1 \}) = k^\alpha \, \mu(\{ x : \rho(x) > 1 \})$.
\end{proof}
\egroup

Let the arrow $\dto$ denote convergence in distribution, and let $\mathcal{L}(Y \mid A)$ denote the law of a random object $Y$ conditionally on an event $A$. For $\alpha > 0$, let $\Pareto(\alpha)$ denote the probability distribution of a random variable $Y$ such that $\Pr(Y > y) = y^{-\alpha}$ for $y \in [1, \infty)$. Recall $T(x) = (\rho(x), \theta(x))$ with $\theta(x) = \rho(x)^{-1} x$ for $x \in S_0$ and recall $\aleph = \{ x \in S : \rho(x) = 1 \}$. Let $\otimes$ signify product measure and let $\1_B$ denote the indicator function of a set $B$. 

\begin{proposition}\label{prop:spec00}
Let $X$ be a random element in $S$ and let $\alpha \in (0, \infty)$. Assume that a modulus $\rho:S\to [0,\infty)$ exists. The following properties are equivalent:
\begin{enumerate}[(i)]
\item
$X$ is regularly varying with index $\alpha > 0$.
\item
The function $u \mapsto \Pr[ \rho(X) > u ]$ is in $\RV_{-\alpha}$ and
there exists a probability measure $H$ on $\aleph=\{x\in S:\rho(x)=1\}$ such that
\begin{equation}
\label{eq:to_H}
\mathcal{L}  ( \theta(X) \mid \rho(X) > u) \dto H, 
  \qquad u \to \infty.
\end{equation}
\item
There exists a probability measure $H$ on $\aleph$ such that
\begin{equation}
\label{eq:to_P_H}
 \mathcal{L} ( \rho(X)/u, \theta(X) \mid \rho(X) > u )
  \dto
  \Pareto(\alpha) \otimes H,
  \qquad u \to \infty.
\end{equation}
\end{enumerate}
In that case, we have
\begin{equation}
\label{eq:to_mu}
  \frac{1}{\Pr[ \rho(X) > u ]}
  \Pr[ u^{-1} X \in \point ]
  \to
  \mu, \qquad u \to \infty,
\end{equation}
where $\mu$ is determined by
\begin{equation}
\label{eq:polar_decomposition}
  \mu \circ T^{-1}(dr, d\theta) = \alpha r^{-\alpha-1} dr \, H(d\theta), 
  \qquad (r, \theta) \in (0, \infty) \times \aleph.
\end{equation}
\end{proposition}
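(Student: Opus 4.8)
The plan is to prove the cycle (i)$\Rightarrow$(iii)$\Rightarrow$(ii)$\Rightarrow$(iii) together with (iii)$\Rightarrow$(i), the displayed formulas \eqref{eq:to_mu}--\eqref{eq:polar_decomposition} emerging along the way. The single most useful preliminary observation is that the polar map $T$ is a homeomorphism from $S_0$ onto $(0,\infty)\times\aleph$: its inverse is $(r,\theta)\mapsto r\theta$, continuous by Definition~\ref{def:mult}(iii), while $T$ itself is continuous because $\rho$ is continuous and strictly positive on $S_0$. Moreover $T$ transports the $M_0$-structure correctly: continuity of $\rho$ together with $\rho(0_S)=0$ shows that each set $\{\rho\ge\delta\}$ is bounded away from the origin, whereas condition~(iii) of Definition~\ref{def:radius} shows conversely that every set $\{d(\point,0_S)\ge\eps\}$ lies in some $\{\rho\ge\delta\}$. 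Hence the two families of neighbourhoods of the origin are cofinal, and $M_0$-convergence on $S_0$ is equivalent, through $T$, to $M_0$-convergence on $(0,\infty)\times\aleph$ with the origin placed at $r=0$. Throughout I write $\nu_\alpha(dr)=\alpha r^{-\alpha-1}\,dr$ on $(0,\infty)$.

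For (i)$\Rightarrow$(iii), starting from the convergence in the definition with auxiliary function $V$ and limit $\mu$, Lemma~\ref{lem:RV:aux} gives $\Pr[\rho(X)>u]\sim c\,V(u)$ with $c=\mu(\{\rho>1\})\in(0,\infty)$, so I may replace $V$ by $\Pr[\rho(X)>u]\in\RV_{-\alpha}$ and $\mu$ by $\mu/c$, obtaining \eqref{eq:to_mu} with $\mu(\{\rho>1\})=1$. Applying the continuous mapping theorem for $M_0$-convergence to $T$ yields $\Pr[\rho(X)>u]^{-1}\Pr[(\rho(X)/u,\theta(X))\in\point]\to\mu\circ T^{-1}$. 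The homogeneity $\mu(\lambda\point)=\lambda^{-\alpha}\mu(\point)$ together with $T(\lambda x)=(\lambda\rho(x),\theta(x))$ forces $\mu\circ T^{-1}((r,\infty)\times B)=r^{-\alpha}H(B)$ for $H(B):=\mu(\{\rho>1,\ \theta\in B\})$, a probability measure; this identifies $\mu\circ T^{-1}=\nu_\alpha\otimes H$ on the generating $\pi$-system of sets $(r,\infty)\times B$, which is \eqref{eq:polar_decomposition}. Restricting to the $\mu\circ T^{-1}$-continuity set $\{r>1\}$ (its boundary $\{r=1\}$ is $\nu_\alpha$-null) and noting that the conditional laws have total mass $1$, I read off \eqref{eq:to_P_H}.

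The implication (iii)$\Rightarrow$(ii) is pure marginalisation: the second marginal of $\Pareto(\alpha)\otimes H$ is $H$, giving \eqref{eq:to_H}, while the first-marginal statement $\mathcal{L}(\rho(X)/u\mid\rho(X)>u)\dto\Pareto(\alpha)$ reads $\Pr[\rho(X)>ru]/\Pr[\rho(X)>u]\to r^{-\alpha}$ for $r>1$, i.e.\ regular variation of the tail with index $-\alpha$. For the converse (ii)$\Rightarrow$(iii) I use the key re-conditioning identity: for $r>1$ and an $H$-continuity set $B\subseteq\aleph$,
\[
\Pr[\rho(X)>ru,\ \theta(X)\in B\mid\rho(X)>u]
=\Pr[\theta(X)\in B\mid\rho(X)>ru]\cdot\frac{\Pr[\rho(X)>ru]}{\Pr[\rho(X)>u]},
\]
where, as $u\to\infty$, the first factor tends to $H(B)$ by \eqref{eq:to_H} applied at the level $ru$ and the second to $r^{-\alpha}$ by the tail regular variation, the product being $(\Pareto(\alpha)\otimes H)((r,\infty)\times B)$. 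Since these sets form a convergence-determining $\pi$-system and the two marginals are tight, \eqref{eq:to_P_H} follows.

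Finally (iii)$\Rightarrow$(i): I define $\mu$ on $S_0$ by $\mu\circ T^{-1}=\nu_\alpha\otimes H$, that is by \eqref{eq:polar_decomposition}. That $\mu\in M_0(S)$ is exactly where Definition~\ref{def:radius}(iii) is needed: given $\eps>0$ I pick $\delta>0$ with $\{d(\point,0_S)\ge\eps\}\subseteq\{\rho\ge\delta\}$, whence $\mu(\{d(\point,0_S)\ge\eps\})\le(\nu_\alpha\otimes H)([\delta,\infty)\times\aleph)=\delta^{-\alpha}<\infty$. For \eqref{eq:to_mu}, by the cofinality above it suffices to show, for each $\eta>0$, that $\Pr[\rho(X)>u]^{-1}\Pr[(\rho(X)/u,\theta(X))\in\point,\ \rho(X)/u>\eta]$ converges weakly to $(\nu_\alpha\otimes H)|_{\{r>\eta\}}$; factoring out $\Pr[\rho(X)>\eta u]/\Pr[\rho(X)>u]\to\eta^{-\alpha}$ and recognising the remaining factor as a conditional law at level $\eta u$ with the radial coordinate rescaled by $\eta$, I obtain the claim from \eqref{eq:to_P_H}, the total masses matching ($\eta^{-\alpha}$ on both sides). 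Transporting back through $T$ gives \eqref{eq:to_mu}. I expect the main obstacle to be precisely this transfer between $M_0$-convergence on $S_0$ and weak convergence of the conditional probability laws: one must track which sets are bounded away from the origin, use the cofinality of the $\rho$- and $d$-neighbourhoods to make $T$ a morphism of $M_0$-spaces, and verify at each step that restricting to $\{r>\eta\}$ lands on continuity sets with matching total masses. The asymptotic independence in (ii)$\Rightarrow$(iii) is conceptually the crux, but the re-conditioning identity renders it routine once the tail regular variation is available.
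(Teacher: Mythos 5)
Your proposal is correct, but it follows a genuinely different route from the paper's proof. The paper proves (i)$\Rightarrow$(ii)\,\&\,(iii), (iii)$\Rightarrow$(ii), and (ii)$\Rightarrow$(i), working at every step directly in $S$: the implication (ii)$\Rightarrow$(i) constructs $\mu$ as the push-forward of $\alpha r^{-\alpha-1}dr\,dH$ and then establishes $M_0$-convergence by a fairly heavy Portmanteau argument (Lindel\"of covers of open sets by sets $\{\rho \in I,\ \theta \in B\}$, inclusion--exclusion, then a separate closed-set step). Your central device is instead to observe that $T$ is a homeomorphism of $S_0$ onto $(0,\infty)\times\aleph$ and that the neighbourhood families $\{\rho<\delta\}$ and $\{d(\point,0_S)<\eps\}$ are mutually cofinal (both halves of this are indeed available: one direction is Definition~\ref{def:radius}(iii), the other is continuity of $\rho$ at $0_S$, as recorded in Section~\ref{sec:space}), so that the whole problem is transported to the product space where scaling acts only on the radial coordinate. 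This makes your (iii)$\Rightarrow$(i) — restrict to $\{r>\eta\}$, factor out $\Pr[\rho(X)>\eta u]/\Pr[\rho(X)>u]\to\eta^{-\alpha}$, recognise a conditional law at level $\eta u$, rescale — noticeably slicker than the paper's covering argument. Your (ii)$\Rightarrow$(iii) re-conditioning identity is, incidentally, exactly the identity the paper uses inside its (ii)$\Rightarrow$(i) proof, and your subsequential-uniqueness step mirrors the one the paper carries out in its (i)$\Rightarrow$(iii) direction (note that, as in the paper, you need the marginals of any subsequential limit — known to be $\Pareto(\alpha)$ and $H$ — to certify that the sets $(r,\infty)\times B$ are continuity sets of that limit before Portmanteau pins down its values). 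The trade-off between the two approaches: yours is more modular and conceptually transparent, but the transported notion of ``$M_0$-convergence on $(0,\infty)\times\aleph$ with origin at $r=0$'' falls outside the literal scope of \citet{HL06}, whose $M_0$-framework and continuous mapping theorem require the excluded set to be a single point of the space, whereas here it is the missing boundary $\{0\}\times\aleph$; so you cannot simply cite Theorems~2.4--2.5 of that paper and must let the cofinality-of-test-functions argument do the work (which you correctly flag as the main obstacle and essentially supply). The paper's approach avoids this by staying strictly within the cited $M_0$-results for $(S,0_S)$, at the cost of the longer covering argument.
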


In terms of integrals, equation~\eqref{eq:polar_decomposition} means that, for $\mu$-integrable functions $f : S_0 \to \reals$, we have
\begin{equation*}
  \int_{S_0} f(x) \, d\mu(x)
  =
  \int_{r=0}^\infty \int_{\theta \in \aleph} f(r\theta) \, dH(\theta) \, \alpha r^{-\alpha-1} \, dr.
\end{equation*}

Proposition~\ref{prop:spec00} is to be compared with Corollary~4.4 in \cite{LRR14}. In the latter paper, the set excluded from the space $S$ is not necessarily just a single point but is allowed to be a closed cone. In contrast, the metric in \cite{LRR14} is supposed to be homogeneous, as in our Example~\ref{ex:111}, an assumption that we avoid here.


\begin{proof}[Proof of Proposition~\ref{prop:spec00}]
We break down the equivalence claim into a number of implications.

\emph{(i) implies (ii) and (iii).}
Let $\tilde{V}$ and $\tilde{\mu}$ be the auxiliary function and the limit measure in the definition of regular variation of $X$. By Lemma~\ref{lem:RV:aux}, we have $\Pr[ \rho(X) > u ] / \tilde{V}(u) \to \tilde{\mu}( \{ x : \rho(x) > 1 \} )$ as $u \to \infty$, the limit being finite and nonzero. Hence, the function $V(u) := \Pr[ \rho(X) > u ]$ is a valid auxiliary function for $X$ too. With this choice, the limit measure is then just a rescaled version of the old one: $\mu( \point ) = \tilde{\mu}(\point) / \tilde{\mu}( \{ x : \rho(x) > 1 \} )$. In particular, $\mu( \{ x : \rho(x) > 1 \} ) = 1$.

Define a Borel measure $H$ on $\aleph$ by
\[
  H(\point) = \mu ( \{ x : \rho(x) > 1, \, \theta(x) \in \point \} ).
\]
By construction, $H(\aleph) = 1$, i.e., $H$ is a probability measure.

For $r \in (0, \infty)$ and Borel sets $B \subset \aleph$, we have
\begin{align*}
  \mu \circ T^{-1}((r, \infty) \times B)
  &=
  \mu( \{ x : \rho(x) > r, \, \theta(x) \in B \} ) \\
  &=
  \mu( r \, \{ x : \rho(x) > 1, \, \theta(x) \in B \} ) \\
  &=
  r^{-\alpha} \, \mu( \{ x : \rho(x) > 1, \, \theta(x) \in B \} ) \\
  &=
  r^{-\alpha} \, H(B).
\end{align*}
Since the collection of sets of the form $\{ (r, \infty) \times B : r \in (0, \infty), B \in \Borel(\aleph) \}$ is a $\pi$-system generating the Borel $\sigma$-field on $(0, \infty) \times \aleph$, we find \eqref{eq:polar_decomposition}.

We prove \eqref{eq:to_H}. For $G \subset \aleph$ open, we have
\begin{align*}
  \liminf_{u \to \infty} \Pr[ \theta(X) \in G \mid \rho(X) > u ]
  &=
  \liminf_{u \to \infty} \frac{\Pr[ u^{-1} X \in T^{-1}( (1, \infty) \times G ) ]}{\Pr[ \rho(X) > u ]} \\
  &\ge
  \mu \circ T^{-1}( (1, \infty) \times G )
  = H(G).
\end{align*}
The inequality on the second line follows from the Portmanteau theorem for $M_0$ convergence \citep[Theorem~2.4]{HL06}: indeed, the set $T^{-1}((1, \infty) \times G)$ is open in $S$ and its closure does not contain the origin. The fact that the above display implies \eqref{eq:to_H} follows from the Portmanteau theorem for weak convergence of probability measures on metric spaces \citep[Theorem~2.1]{Billingsley:1999uc}.

Further, for $\lambda \in [1, \infty)$, we have
\begin{equation}
\label{eq:to_P}
  \Pr[ \rho(X) / u > \lambda \mid \rho(X) > u ] 
  = \frac{V(\lambda u)}{V(u)} 
  \to \lambda^{-\alpha}, 
  \qquad u \to \infty.
\end{equation}
It follows that $\law( \rho(X) / u \mid \rho(X) > u ) \dto \Pareto(\alpha)$ as $u \to \infty$.

By \eqref{eq:to_H} and \eqref{eq:to_P}, it follows that the distributions $\law( \rho(X)/u, \theta(X) \mid \rho(X) > u )$ are asymptotically tight as $u \to \infty$. It remains to show that there is only a single accumulation point.

Let $B \in \Borel(\aleph)$ be a $H$-continuity set and let $I$ be the open interval $(\lambda_1, \lambda_2)$ with $1 \le \lambda_1 < \lambda_2 < \infty$. The set $T^{-1}(I \times B) \subset S_0$ is bounded away from the origin and is a continuity set with respect to $\mu$. It follows that
\begin{multline*}
  \Pr[ \rho(X) / u \in I, \, \theta(X) \in B \mid \rho(X) > u ]
  = \frac{1}{V(u)} \Pr[ u^{-1}X \in T^{-1}(I \times B) ] \\
  \to \mu \circ T^{-1}(I \times B)
  = (\lambda_1^{-\alpha} - \lambda_2^{-\alpha}) \, H(B)
  = (\Pareto(\alpha) \otimes H)(I \times B)
\end{multline*}
as $u \to \infty$. This fixes the value of $L(I \times B)$ for any law $L$ that can arise as the limit in distribution of a sequence $[ \rho(X)/u_n, \theta(X) \mid \rho(X) > u_n]$ where $u_n \to \infty$ as $n \to \infty$. The collection of such sets $I \times B$ forms a $\pi$-system generating $\Borel((1, \infty) \times \aleph)$. (Use the Lindel\"of property to write every open subset of the separable metric space $(1, \infty) \times \aleph$ as a countable union of sets of the form $I \times B$, with $B$ an open ball in $\aleph$ whose boundary is an $H$-null set.) It follows that all sequences $[\rho(X)/u_n, \theta(X) \mid \rho(X) > u_n]$ converge in distribution to the same limit.


\textit{(iii) implies (ii).}
Convergence in distribution \eqref{eq:to_H} is a consequence of convergence in distribution \eqref{eq:to_P_H} and the continuous mapping theorem. Moreover, for $\lambda \ge 1$, we have $\Pr[ \rho(X) > \lambda u ] / \Pr[ \rho(X) > u ] = \Pr[ \rho(X)/u > \lambda \mid \rho(X) > u ] \to \lambda^{-\alpha}$ as $u \to \infty$. It follows that the function $u \mapsto \Pr[ \rho(X) > u ]$ belongs to $\RV_{-\alpha}$.

\emph{(ii) implies (i).}
Define a measure $\mu$ on $S_0$ by
\[
  \mu(B) 
  = 
  \int_{r = 0}^\infty 
    \int_{\theta \in \aleph} 
      \1_B (r\theta) \, 
    dH(\theta) \, 
  \alpha r^{-\alpha-1} \, dr,
  \qquad B \in \Borel(S_0),
\]
i.e., $\mu$ is the push-forward of the product measure $\alpha r^{-\alpha-1} dr \, dH(\theta)$ on $(0, \infty) \times \aleph$ induced by the map $(0, \infty) \times \aleph \to S_0 : (r, \theta) \mapsto r\theta$.

The measure $\mu$ is finite on complements of neighbourhoods of the origin. Indeed, let $\eps > 0$. By assumption, there exists $\delta > 0$ such that $d(x, 0) > \eps$ implies that $\rho(x) > \delta$. Therefore, $\{ x : d(x, 0) > \eps \} \subset \{ x : \rho(x) > \delta \}$. The $\mu$-measure of the latter set is equal to $\delta^{-\alpha}$, and thus finite.

We show that \eqref{eq:to_mu} holds. Let $B \in \Borel(\aleph)$ be a $H$-continuity set, i.e., $H(\partial B) = 0$, where $\partial B$ denotes the topological boundary of $B$ in $\aleph$. Let $0 < \lambda < \infty$. Put $V(u) = \Pr[ \rho(X) > u ]$. By the Portmanteau theorem for weak convergence of probability measures,
\begin{align*}
  \lefteqn{
  \frac{1}{V(u)} \Pr[ u^{-1} X \in \{ x : \rho(x) > \lambda, \, \theta(x) \in B \} ]
  } \\
  &=
  \frac{V(\lambda u)}{V(u)}
  \Pr[ \rho(X)^{-1} X \in B \mid \rho(X) > \lambda u ] \\
  &\to
  \lambda^{-\alpha} \, H(B) 
  = \mu ( \{ x : \rho(x) > \lambda, \, \theta(x) \in B \} ), \qquad u \to \infty.
\end{align*}
Since the limit is continuous in $\lambda$ and since $\{x : \rho(x) > \lambda \} \subset \{x : \rho(x) \ge \lambda \} \subset \{ x : \rho(x) > (1-\eps) \lambda \}$ for every $\eps \in (0, 1)$, we find that the above display remains valid if we replace `$\rho(x) > \lambda$' by `$\rho(x) \ge \lambda$'. It follows that, for every open interval $I = (\lambda_1, \lambda_2)$ with $0 < \lambda_1 < \lambda_2 < \infty$ and for each $H$-continuity set $B \in \Borel(\aleph)$, we have
\begin{multline}
\label{eq:IB}
  \frac{1}{V(u)} 
  \Pr[ u^{-1} X \in \{ x : \rho(x) \in I, \, \theta(x) \in B \} ] \\
  \to
  \mu ( \{ x : \rho(x) \in I, \, \theta(x) \in B \} ), \qquad u \to \infty.
\end{multline}

Let $G \subset S$ be open and such that $0_S \notin G^-$. The set $\{ (r, \theta) \in (0, \infty) \times \aleph : r\theta \in G \}$ is open by continuity of the scalar multiplication map. For every $x \in G$, there exists $0 < \eps < \rho(x)$ such that the set $(\rho(x) - \eps, \rho(x) + \eps) \times \{ \theta \in \aleph : d(\theta, \theta(x)) < \eps \}$ is a subset of $\{ (r, \theta) : r\theta \in G \}$. By decreasing $\eps$ if needed, we can ensure that the ball $\{ \theta \in \aleph : d(\theta, \theta(x)) < \eps \}$ is a $H$-discontinuity set. Let $\eps(x)$ denote the value of $\eps$ thus obtained, depending on $x \in G$.

The sets $A(x) = \{ y \in S : \abs{ \rho(y) - \rho(x) } < \eps(x), \, d(\theta(y), \theta(x)) < \eps(x) \}$, for $x \in G$, are open subsets of $G$ and they cover $G$ as $x$ ranges over $G$. By the Lindel\"of property, there exists a countable subcover of $G$ by sets $A(x_i)$, say. Finite intersections of the sets $A(x_i)$ are of the form $\{ y : \rho(y) \in I, \theta(x) \in B \}$, where $I$ is an open interval of $(0, \infty)$, bounded away from $0$ and $\infty$, and $B$ is an $H$-continuity subset of $\aleph$. 

Fix $\eta > 0$. Since $\mu(G) < \infty$, we can find a finite number $k$ such that $\mu(G) \le \mu(\bigcup_{i=1}^k A(x_i)) + \eta$. Write $\mu_u(\point) = V(u)^{-1} \Pr[ u^{-1} X \in \point ]$. By \eqref{eq:IB} and the inclusion-exclusion formula, $\mu_u( \bigcup_{i=1}^k A(x_i) ) \to \mu( \bigcup_{i=1}^k A(x_i) )$ as $u \to \infty$. But then we have $\liminf_{u \to \infty} \mu_u( G ) \ge \mu(G) - \eta$. Since $\eta > 0$ was arbitrary, we can conclude that $\liminf_{u \to \infty} \mu_u(G) \ge \mu(G)$.

Finally, let $F \subset S$ be closed and such that $0_S \notin F$. Since the complement of $F$ is open, there exists $\eps > 0$ such that $d(x, 0_S) \le \eps$ implies $x \notin F$. Further, there exists $\delta > 0$ such that $\rho(x) \le \delta$ implies $d(x, 0) \le \eps$. As a consequence, $F \subset \{ x : \rho(x) > \delta \}$. Define $G = \{ x : \rho(x) > \delta \} \setminus F$. Then $G$ is open and $0_S \notin G^-$. From the previous paragraph, recall $\liminf_{u \to \infty} \mu_u(G) \ge \mu(G)$. Further, $\mu_u(G) = V(\delta u)/V(u) - \mu_u(F)$ and $\mu(G) = \delta^{-\alpha} - \mu(F)$. It follows that $\limsup_{u \to \infty} \mu_u(F) \le \mu(F)$. Conclude by the Portmanteau Theorem~2.4 in \cite{HL06}.
\end{proof}

\section{Regularly varying time series}
\label{sec:regul-vary-stoch}

Recall that $(S, d)$ is a complete, separable metric space equipped with an origin and a scalar multiplication. In this section, no modulus will be needed. For simplicity, let from now on the origin of $S$ be denoted simply by $0$ rather than by $0_S$.

Let $S^{\mathbb{Z}}$ be the space of all sequences
$(x_t)_{t\in\mathbb{Z}}$ with elements in $S$. For nonnegative integer
$m$, identify the set $S^{\{ -m,\ldots ,m\} }$ with the set $S^{2m+1}$, so
that we can write $(x_{-m},\ldots,x_m) \in S^{2m+1}$. The sets
$S^{\mathbb{Z}}$ and $S^{2m+1}$ are endowed with the respective product topologies,
and these topologies can be metrized by the metrics $d_{\infty}$ and $d_m$, respectively, where
\begin{align*}
d_{\infty}(\bm{x},\bm{y})&= \sum_{t\in \mathbb{Z}} 2^{-|t|}
  \frac{d(x_t,y_t)}{1+d(x_t,y_t)}\,, \quad \bm{x},\bm{y}\in
                                   S^{\infty}\,,\\
d_m(\bm{x},\bm{y})&= \sum_{t=-m}^m 2^{-|t|}
                            \frac{d(x_t,y_t)}{1+d(x_t,y_t)}\,, \quad
                            \bm{x},\bm{y}\in S^{2m+1}\,.
\end{align*}
The metric spaces $(S^{\mathbb{Z}},d_{\infty})$ and $(S^{2m+1},d_m)$
 are complete and separable too. The precise choice of the metrics is
 not essential, and we could have chosen equivalent ones, replacing
 $d(x_t,y_t)/(1+d(x_t,y_t))$ by $\min\{d(x_t,y_t),1\}$, for instance.

Let $\bm{0}\in S^{\mathbb{Z}}$ be the zero sequence and let $\bm{0}^{(m)}=(0,\ldots,0)\in S^{2m+1}$. 
These are the origins of the spaces $S^{\mathbb{Z}}$ and $S^{2m+1}$, respectively. Scalar multiplication on these spaces is defined componentwise.

Let $\bm{X}=(X_t)_{t\in \mathbb{Z}}$ be a discrete-time stochastic
process, not necessarily stationary, taking values in $S$. There are two ways of defining regular
variation of $\bm{X}$: either by viewing $\bm{X}$ as a random
element of $S^{\mathbb{Z}}$ or via its finite-dimensional distributions. 
According to the following theorem, these two definitions are
essentially equivalent. For integer $m\ge 0$, write
$\bm{X}^{(m)}=(X_{-m},\ldots,X_m)$, a random element in
$S^{2m+1}$.

\begin{theorem}
  \label{thm:main}
Let $(S,d)$ be a complete, separable metric space equipped with an origin
$0\in S$ and a scalar multiplication. Let $\bm{X}=(X_t)_{t\in
  \mathbb{Z}}$ be a stochastic process in $S$. Let $\alpha \in (0, \infty)$ and
$V\in \RV_{-\alpha}$. The following two statements are equivalent:
\begin{itemize}
\item[(a)] There exists $\mu^{(\infty)}\in M_{\bm{0}}(S^{\infty})$
  such that $\mu^{(\infty)}(\{\bm{x}:x_0\neq 0 \})>0$ and such
  that, as $u \to \infty$,
\begin{equation}\label{eq:seqrv1}
\frac{1}{V(u)}\Pr[u^{-1}\bm{X}\in \point] \to \mu^{(\infty)}(\point)
  \quad \text{ in } M_{\bm{0}}(S^{\infty}).
\end{equation}
\item[(b)] For each nonnegative integer $m$, there exists a non-zero
  $\mu^{(m)}\in M_{\bm{0}^{(m)}}$ such that, as $u \to \infty$, 
\begin{equation}\label{eq:seqrv2}
\frac{1}{V(u)}\Pr[u^{-1}\bm{X}^{(m)}\in \point]\to
  \mu^{(m)}(\point)\quad \text{ in } M_{\bm{0}^{(m)}}(S^{2m+1}).
\end{equation}
\end{itemize}
If $(X_t)_{t\in \mathbb{Z}}$ is strictly stationary, the condition $\mu^{(\infty)}(\{\bm{x}:x_0\neq 0 \})>0$ in (a) is equivalent to the condition that $\mu^{(\infty)}$ is non-zero. 
\end{theorem}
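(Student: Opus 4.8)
The plan is to prove the two implications separately and then read off the stationarity addendum from shift-invariance of the limit measure. The unifying tool is the fact that the coordinate maps between the sequence space and its finite stretches are \emph{proper} for $M_0$-convergence, so that the continuous mapping theorem (Theorem~2.5 in \cite{HL06}) transfers convergence back and forth.

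For \emph{(a) $\Rightarrow$ (b)}, the key observation is that the projection $\pi_m : S^{\infty} \to S^{2m+1}$, $\bm{x}\mapsto \bm{x}^{(m)}=(x_{-m},\ldots,x_m)$, satisfies $d_m(\pi_m\bm{x},\bm{0}^{(m)}) \le d_\infty(\bm{x},\bm{0})$ directly from the definition of the two metrics, so $\pi_m^{-1}\{\bm{y}:d_m(\bm{y},\bm{0}^{(m)})>\eps\}\subset\{\bm{x}:d_\infty(\bm{x},\bm{0})>\eps\}$ is bounded away from $\bm{0}$; hence $\pi_m$ is proper. Continuous mapping then turns \eqref{eq:seqrv1} into \eqref{eq:seqrv2} with $\mu^{(m)}=\mu^{(\infty)}\circ\pi_m^{-1}$, and this measure is non-zero since $\{\bm{x}:x_0\neq 0\}=\pi_m^{-1}(\{\bm{y}:y_0\neq0\})$ gives $\mu^{(m)}(\{y_0\neq 0\})=\mu^{(\infty)}(\{x_0\neq0\})>0$.

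For \emph{(b) $\Rightarrow$ (a)}, I would first record that the family $(\mu^{(m)})_{m\ge0}$ is projective: the central-coordinate projections $p_{m,m'}:S^{2m+1}\to S^{2m'+1}$ for $m>m'$ are proper for the same reason, so continuous mapping applied to \eqref{eq:seqrv2} at level $m$ together with uniqueness of $M_0$-limits at level $m'$ yields $\mu^{(m)}\circ p_{m,m'}^{-1}=\mu^{(m')}$. Next I would produce a limit on $S^\infty$ by a relative-compactness argument for $\nu_u:=V(u)^{-1}\Pr[u^{-1}\bm{X}\in\point]$ in $M_{\bm{0}}(S^\infty)$. Boundedness on each region $\{d_\infty>\eps\}$ is immediate from the geometric tail bound $d_\infty(\bm{x},\bm{0})-d_m(\pi_m\bm{x},\bm{0}^{(m)})=\sum_{|t|>m}2^{-|t|}\,d(x_t,0)/(1+d(x_t,0))\le 2^{-m+1}$: choosing $m$ with $2^{-m+1}<\eps$ gives $\nu_u\{d_\infty>\eps\}\le(\nu_u\circ\pi_m^{-1})\{d_m>\eps-2^{-m+1}\}\to\mu^{(m)}\{d_m>\eps-2^{-m+1}\}<\infty$. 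Tightness I would obtain coordinatewise, each $\mu^{(m)}$ being a Radon measure on the Polish space $S^{2m+1}$ off the origin, assembling the coordinate compacts into a compact subset of the product that carries all but $\eta$ of the mass uniformly in $u$; an $M_0$-analogue of Prokhorov's theorem (Appendix~\ref{sec:m_0-convergence}) then yields subsequential limits. To identify them, note that any subsequential limit $\mu^{*}\in M_{\bm{0}}(S^\infty)$ satisfies $\mu^{*}\circ\pi_m^{-1}=\mu^{(m)}$ for all $m$, so all subsequential limits agree on the $\pi$-system of cylinders bounded away from $\bm{0}$; since these generate $\Borel(S^\infty)$ and the limits are finite on each $\{d_\infty>\eps\}$, Dynkin's lemma followed by $\eps\downto0$ forces them to coincide, so $\nu_u$ converges to a single $\mu^{(\infty)}$. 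Finally $\mu^{(\infty)}(\{x_0\neq0\})=\mu^{(0)}(S_0)>0$, because $\mu^{(0)}$ is non-zero and, as an $M_{\bm{0}}$-measure, lives on $S_0$.

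For the stationarity addendum only the implication `$\mu^{(\infty)}$ non-zero $\Rightarrow \mu^{(\infty)}(\{x_0\neq0\})>0$' requires work. The backshift $B:(x_t)\mapsto(x_{t+1})$ is a homeomorphism of $S^\infty$ fixing $\bm{0}$ and commuting with componentwise scalar multiplication; its inverse is continuous, hence $B$ is proper, so strict stationarity $B\bm{X}\eqd\bm{X}$ gives $\nu_u=\nu_u\circ B^{-1}$ and, in the limit, $\mu^{(\infty)}\circ B^{-1}=\mu^{(\infty)}$. Invariance yields $\mu^{(\infty)}(\{x_s\neq0\})=\mu^{(\infty)}(\{x_0\neq0\})$ for every $s\in\mathbb{Z}$; were this common value zero, countable subadditivity over $S^\infty\setminus\{\bm{0}\}=\bigcup_{s}\{x_s\neq0\}$ would force $\mu^{(\infty)}=0$. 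The main obstacle throughout is the relative-compactness/tightness step in the infinite product: one must rule out escape of mass to the `time-infinity' coordinates, which is precisely what the geometric tail bound on $d_\infty-d_m$ and the coordinatewise Radon property of the $\mu^{(m)}$ are there to control.
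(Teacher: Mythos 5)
Your proof of (a) $\Rightarrow$ (b), your projectivity relation $\mu^{(m)}\circ p_{m,m'}^{-1}=\mu^{(m')}$, and your stationarity addendum all coincide in substance with the paper's argument (the paper derives the $t$-invariance of $\mu^{(\infty)}(\{\bm{x}: x_t\neq 0\})$ directly from stationarity of $\bm{X}$ rather than through invariance under the backshift, but the two arguments are interchangeable). The genuine divergence is in (b) $\Rightarrow$ (a). The paper is constructive: it restricts each $\mu^{(m)}$ to the complement of the neighbourhood $A_k^{(m)}=\{\bm{x}: \max_{-k\le j\le k}d(x_j,0)\le 1/k\}$, normalizes these finite restrictions to probability measures, applies the Daniell--Kolmogorov extension theorem for each fixed $k$, glues the resulting measures $\mu_k^{(\infty)}$ along the shrinking neighbourhoods $A_k^{(\infty)}$ into a single $\mu^{(\infty)}$ satisfying \eqref{eq:mummuinf}, and only then proves the convergence \eqref{eq:seqrv1} via the $\pi$-system criterion of Theorem~\ref{thm:hh1} applied to cylinders with $\mu^{(m)}$-smooth bases. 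You instead extract $\mu^{(\infty)}$ as a subsequential limit of $\nu_u$ by relative compactness in $M_{\bm{0}}(S^{\ZZ})$ and identify all subsequential limits through the projections plus a Dynkin argument. Your route is viable on a Polish $S$ and trades the paper's somewhat delicate gluing construction for a compactness theorem; what the paper's route buys is that no Prohorov-type input is ever needed --- Theorem~\ref{thm:hh1} asks only for convergence on a $\pi$-system --- and the candidate limit comes equipped with \eqref{eq:mummuinf} by construction, whereas your route shortens the construction at the price of a heavier compactness step. Since $M_0$-convergence is metrizable \citep{HL06}, your subsequence principle is legitimate.

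Two steps in your compactness argument need repair, though both are fixable. First, Appendix~\ref{sec:m_0-convergence} contains no $M_0$-analogue of Prohorov's theorem: Lemma~\ref{lem:hh1} reduces $M_0$-convergence to weak convergence off small neighbourhoods, and Theorem~\ref{thm:hh1} is a $\pi$-system criterion. The relative-compactness statement you invoke must instead be taken from \citet{HL06}, who do prove a Prohorov-type criterion for $M_0$, or be established by hand along sequences $u_n\to\infty$. Second, your tightness justification is miscalibrated: the Radon property of the \emph{limit} measures $\mu^{(m)}$ controls nothing uniformly in $u$. What you actually need is that a weakly convergent sequence of finite Borel measures on a Polish space is uniformly tight --- the converse half of Prohorov's theorem \citep[Theorem~5.2]{Billingsley:1999uc} --- applied to the restrictions of $\nu_u\circ\pi_m^{-1}$ to closed shells $\{\bm{y}: d_m(\bm{y},\bm{0}^{(m)})\ge\delta\}$; with that, your assembly of coordinate compacts via Tychonoff and the bound $d_\infty\le d_m\circ\pi_m+2^{-m+1}$ does go through. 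Two smaller points: the shell $\{d_m>\eps-2^{-m+1}\}$ need not be a $\mu^{(m)}$-continuity set, so in the boundedness step replace the asserted convergence by the portmanteau upper bound over its closure, which suffices; and cylinders bounded away from $\bm{0}$ do not generate $\Borel(S^{\ZZ})$ outright --- run the Dynkin argument on the trace $\sigma$-fields of the complements $S^{\ZZ}\setminus A_k^{(\infty)}$, where all subsequential limits are finite with equal total mass, and then let $k\to\infty$, exactly parallel to the paper's use of \eqref{eq:muellmuk}.
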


In case the two equivalent conditions of Theorem \ref{thm:main} hold,
we say that the stochastic process $(X_t)_{t\in \mathbb{Z}}$ is
\emph{regularly varying}.

\begin{proof}[Proof of Theorem~\ref{thm:main}]
Regarding the last statement: if $(X_t)_{t \in \ZZ}$ is strictly stationary, then the value of $\mu^{(\infty)}( \{ \bm{x} : x_t \ne 0 \} )$ does not depend on $t \in \ZZ$, and since $S^{\ZZ} \setminus \{ \mathbf{0} \} = \bigcup_{t \in \ZZ} \{ \bm{x} : x_t \ne 0 \}$, we find that $\mu^{(\infty)}( \{ \bm{x} : x_0 \ne 0 \} ) > 0$ if and only if $\mu^{(\infty)}$ is non-zero.

For integer $n \ge m \ge 0$, define the projections $Q_m : S^{\ZZ} \to S^{2m+1}$ and $Q_{n,m} : S^{2n+1} \to S^{2m+1}$ by
\begin{align*}
  Q_m( \bm{x}) &= (x_{-m}, \ldots, x_m), && \bm{x} \in S^\ZZ, \\
  Q_{n,m}( x_{-n}, \ldots, x_n ) &= (x_{-m}, \ldots, x_m), && (x_{-n}, \ldots, x_n) \in S^{2n+1}.
\end{align*}
Let $Q_m^{-1}$ and $Q_{n,m}^{-1}$ denote the usual inverse images, inducing maps from the Borel $\sigma$-field $\Borel(S^{2m+1})$ to the Borel $\sigma$-fields $\Borel(S^\ZZ)$ and $\Borel(S^{2n+1})$, respectively. The projections are continuous and we have $Q_m(\mathbf{0}) = Q_{n,m}(\mathbf{0}^{(n)}) = \mathbf{0}^{(m)}$.

Further, for scalar $u > 0$ and integer $m \ge 0$, define the measures
\begin{align*}
  \upsilon_u^{(m)}( \point )
  &=
  \frac{1}{V(u)} \Pr \bigl[ u^{-1} \bm{X}^{(m)} \in \point \bigr], \\
  \upsilon_u^{(\infty)}( \point )
  &=
  \frac{1}{V(u)} \Pr \bigl[ u^{-1} \bm{X} \in \point \bigr].
\end{align*}
For integer $n \ge m \ge 0$, we have $Q_{n,m}( \bm{X}^{(n)} ) = Q_m( \bm{X} ) = \bm{X}^{(m)}$. We obtain
\[
  \upsilon_u^{(n)} \circ Q_{n,m}^{-1} = \upsilon_u^{(\infty)} \circ Q_m^{-1} = \upsilon_u^{(m)}.
\]

\textit{(a) implies (b).}
By assumption, $\upsilon_u^{(\infty)} \to \mu^{(\infty)}$ as $u \to \infty$ in $M_{\mathbf{0}}( S^\ZZ )$. Theorem~2.5 in \cite{HL06} yields
\[
  \upsilon_u^{(m)} 
  = 
  \upsilon_u^{(\infty)} \circ Q_m^{-1} 
  \to 
  \mu^{(\infty)} \circ Q_m^{-1} 
  = 
  \mu^{(m)}, 
  \qquad \text{as $u \to \infty$}
\]
in $M_{\bm{0}^{(m)}}( S^{2m+1} )$. The measure $\mu^{(m)}$ is non-zero, since $\mu^{(m)}( \{ \bm{x} : x_0 \ne 0 \} ) = \mu^{(\infty)}( \{ \bm{x} : x_0 \ne 0 \} ) > 0$.

\textit{(b) implies (a).}
Since $\upsilon_u^{(n)} \circ Q_{n,m}^{-1} = \upsilon_u^{(m)}$ for integer $n \ge m \ge 0$, we have, letting $u \to \infty$ and using again \citet[Theorem~2.5]{HL06},
\begin{equation}
\label{eq:munmum}
  \mu^{(n)} \circ Q_{n,m}^{-1} = \mu^{(m)}.
\end{equation}
This self-consistency property of the measures $\mu^{(m)}$ suggests
the use of the Daniell--Kolmogorov extension theorem to construct a
Borel measure $\mu^{(\infty)}$ on $S^\ZZ \setminus \{ \mathbf{0} \}$
such that $\mu^{(\infty)} \circ Q_m^{-1} = \mu^{(m)}$. Care is needed,
however, since the measures $\mu^{(m)}$ are finite only on complements
of neighbourhoods of $\mathbf{0}^{(m)}$ in $S^{2m+1}$. Moreover, the
spaces $S^{2m+1} \setminus \{ \mathbf{0}^{(m)} \}$ are not product
spaces. A more delicate construction is therefore needed to obtain
$\mu^{(\infty)}$, starting from a decreasing sequence of neighborhoods
of the zero sequence $\bm{0}$ in
$S^\ZZ$. Convergence to the limit measure $\mu^{(\infty)}$ will then
be shown using Theorem~\ref{thm:hh1}. 


Let $\Borel_f(S^{\ZZ})$ be the class of cylinders of $S^\ZZ$, that is, 
\begin{equation}
\label{eq:Borel:f}
  \Borel_f(S^\ZZ) 
  = 
  \bigcup_{m=0}^\infty \{ Q_m^{-1}(A) : A \in \mathcal{B}(S^{2m+1}) \}.
\end{equation}
For integer $m \ge 0$ and for real $r > 0$, define
\begin{equation*}
  N_{m,r}(\bm{x}) = \{ \bm{y} \in S^{2m+1}: d_m(\bm{x}, \bm{y}) < r \}, \qquad
  \bm{x} \in S^{2m+1}.
\end{equation*}  
For $\bm{x}, \bm{y} \in S^{\ZZ}$ we have $d_\infty( \bm{x}, \bm{y} ) \le d_m( Q_m(\bm{x}), Q_m(\bm{y}) ) + 2^{-m}$. We obtain
\begin{eqnarray*}
  Q_m^{-1}(N_{m,r}(Q_m(\bm{x})))
  \subset
  \{ \bm{y} \in S^{\ZZ} : d_{\infty}(\bm{x}, \bm{y}) < r + 2^{-m} \}, 
  \qquad \bm{x} \in S^{\ZZ}.
\end{eqnarray*}
For every $\varepsilon > 0$ we can find $r > 0$ and integer $m \ge 1$
such that $r + 2^{-m} \le \varepsilon$. Therefore, we can write any
open subset of $S^\ZZ$ as a countable union of open cylinders: apply the Lindel\"{o}f property, using the separability of the metric space $(S^\ZZ, d_\infty)$. The $\sigma$-field generated by $\Borel_f(S^\ZZ)$ is thus equal to $\Borel(S^\ZZ)$. Clearly, $\Borel_f(S^\ZZ)$ is a $\pi$-system.

Fix integer $m \ge 0$ and let
\begin{equation}
\label{eq:Borel:s}
  \Borel_s( S^{2m+1} )
  =
  \{ B \in \Borel(S^{2m+1}) : \mu^{(m)}(\partial B) = 0 \},
\end{equation}
i.e., the collection of $\mu^{(m)}$-smooth Borel sets of $S^{2m+1}$. Since $\partial (A \cap B) \subset \partial A \cup \partial B$ for all subsets $A$ and $B$ of a topological space, $\Borel_s(S^{2m+1})$ is a $\pi$-system. Moreover, finiteness of $\mu^{(m)}$ on complements of neighbourhoods of $\mathbf{0}^{(m)}$ together with separability of the metric space $(S^{2m+1}, d_m)$ implies, via the Lindel\"{o}f property, that every open subset of $S^{2m+1}$ can be covered by a countable collection of $\mu^{(m)}$-smooth open balls. In particular, the $\sigma$-field generated by $\Borel_s(S^{2m+1})$ is equal to $\Borel(S^{2m+1})$.

For integer $m \ge k \ge 1$, define the subset $A_k^{(m)}$ of $S^{2m+1}$ by
\[
  A_k^{(m)}
  =
  \left\{
    \bm{x} \in S^{2m+1} : \max_{-k \le j \le k} d(x_j, 0) \le 1/k
  \right\}.
\]
By homogeneity of $\mu^{(m)}$, we have $\mu^{(m)}(\{ \bm{x} \in S^{2m+1} : d(x_j, 0) = c \}) = 0$ for all integer $m \ge 0$, $j\in \{-m,\ldots,m\}$, and real $c>0$. Therefore, $\mu^{(m)}( \partial A_k^{(m)} ) = 0$ for all integer $m \ge k \ge 1$. The set $S^{2m+1}\setminus A_k^{(m)}$ is $\mu^{(m)}$-smooth and open. By construction, $d_m(\bm{x},\mathbf{0}^{(m)}) \ge 2^{-k}/(1+k)$ for $\bm{x} \in S^{2m+1} \setminus A_k^{(m)}$ while $d_m(\bm{x}, \mathbf{0}^{(m)})\le 3/(1+k)$ for $\bm{x}\in A_k^{(m)}$. As a consequence of the former inequality, $\mu^{(m)}(S^{2m+1}\setminus A_k^{(m)}) < \infty$.

For integer $m \ge k \ge 1$, write
\begin{equation}
\label{eq:mumk}
  \mu^{(m)}_k = \mu^{(m)}( \point \setminus A_k^{(m)} ). 
\end{equation}
If additionally $n \ge m$, we have, since $Q_{n,m}^{-1}( A_k^{(m)} ) = A_k^{(n)}$, by \eqref{eq:munmum},
\begin{equation}
\label{eq:mumkmunk}
  \mu^{(n)}_k \circ Q_{n,m}^{-1} 
  = 
  \mu^{(m)}_k, \qquad n \ge m \ge k \ge 1.
\end{equation}

Let $R_k = \mu^{(k)}( S^{2k+1} \setminus A_k^{(k)} )$ be the common value of the total mass of the measures $\mu^{(m)}_k$ for $m \ge k$. Then $0 < R_k < \infty$: positivity follows from the fact that $\mu^{(k)}$ is nonzero and homogenous; finitess follows because $A_k^{(k)}$ is a neighbourhood of $\bm{0}^{(k)}$ in $S^{2k+1}$.

Fix integer $k \ge 1$. For integer $m \ge k$, consider the probability measures $P_k^{(m)} = R_k^{-1} \mu_k^{(m)}$ on $S^{2m+1}$. By \eqref{eq:mumkmunk}, we have
\begin{equation}
\label{eq:PmkPnk}
  P_k^{(n)} \circ Q_{n,m}^{-1}
  =
  P_k^{(m)},
  \qquad n \ge m \ge k.
\end{equation}
By \eqref{eq:PmkPnk}, the family $(P_k^{(m)})_{m \ge k}$ is consistent in the sense of \citet[Chapter~4, Section~8]{POL02}. Since the metric space $(S^{2m+1}, d_m)$ is separable and complete for all $m \ge k$, every probability measure $P_k^{(m)}$ is tight \citep[Theorem~1.3]{Billingsley:1999uc}. According to the Daniell--Kolmogorov extension theorem \citep[Theorem~53]{POL02}, there exists a tight probability measure $P_k^{(\infty)}$ on $S^\ZZ$ such that 
\begin{equation} 
\label{eq:Pkinf}
  P_k^{(\infty)} \circ Q_m^{-1} = P_k^{(m)}, \qquad m \ge k.
\end{equation}
Define $\mu_k^{(\infty)} = R_k \, P_k^{(\infty)}$. Then \eqref{eq:Pkinf} implies
\begin{equation}
\label{eq:muk}
  \mu_k^{(\infty)} \circ Q_m^{-1} = \mu_k^{(m)}, \qquad m \ge k.
\end{equation}

The sets
\[
  A_k^{(\infty)}
  = 
  \left\{ 
    \bm{x} \in S^{\ZZ} : \max_{-k \le j \le k} d(x_j, 0) \le k^{-1} 
  \right\},
  \qquad k \ge 1,
\]
form a decreasing sequence of closed neighbourhoods of $\mathbf{0}$ in $S^\ZZ$. Clearly, $A_k^{(\infty)} \subset \{ \bm{x} : d_\infty(\bm{x}, \mathbf{0}) \le 3/(1+k) \}$. For $B \subset S^\ZZ$ such that $\mathbf{0} \notin B^-$, there exists $k_0 \ge 1$ such that $A_k^{(\infty)} \cap B = \varnothing$ for all $k \ge k_0$.

By \eqref{eq:mumk} and \eqref{eq:muk}, the measure $\mu_k^{(\infty)}$ is finite and vanishes on $A_k^{(\infty)} = Q_k^{-1}(A_k^{(k)})$:
\begin{align}
\label{eq:muk:zero}
  \mu_k^{(\infty)} \bigl( A_k^{(\infty)} \bigr) 
  &= 
  \mu^{(k)} \bigl( A_k^{(k)} \setminus A_k^{(k)} \bigr) 
  =
  0, \\
\label{eq:muk:finite}
  \mu_k^{(\infty)} \bigl( S^\ZZ \setminus A_k^{(\infty)} \bigr)
  &=
  \mu^{(k)} \bigl( S^{2k+1} \setminus A_k^{(k)} \bigr)
  <
  \infty.
\end{align}
Moreover, we have
\begin{equation}
\label{eq:muellmuk}
  \mu_\ell^{(\infty)} \bigl( \point \setminus A_k^{(\infty)} \bigr)
  = \mu_k^{(\infty)} \bigl( \point \setminus A_k^{(\infty)} \bigr)
  = \mu_k^{(\infty)}(\point),
  \qquad \ell \ge k \ge 1.
\end{equation}
Indeed, for $m \ge \ell$ and $B \in \Borel(S^{2m+1})$, we have, by \eqref{eq:muk},
\begin{align*}
  \mu_\ell^{(\infty)} \bigl( Q_m^{-1}(B) \setminus A_k^{(\infty)} \bigr)
  &= \mu_\ell^{(\infty)} \bigl( Q_m^{-1}(B \setminus A_k^{(m)}) \bigr) \\
  &= \mu_\ell^{(m)}(B \setminus A_k^{(m)})
  = \mu_k^{(m)}(B)
  = \mu_k^{(\infty)}(Q_m^{-1}(B)),
\end{align*}
and the cylinders of $S^\ZZ$ form a $\pi$-system generating $\Borel(S^\ZZ)$; apply Theorem~3.3 in \citet{BIL95} to arrive at \eqref{eq:muellmuk}.

According to \eqref{eq:muellmuk}, the measures $\mu_k^{(\infty)}$ are successive extensions of one another, each measure being supported on $S^\ZZ \setminus A_k^{(\infty)}$, a sequence of subsets of $S^\ZZ$ which is growing to $S^\ZZ \setminus \{ \mathbf{0} \}$ as $k \to \infty$. These properties can be used to define a measure $\mu^{(\infty)}$ concentrated on $S^\ZZ \setminus \{ \mathbf{0} \}$ by
\[
  \mu^{(\infty)}(B)
  = \mu_1^{(\infty)} \bigl( B \setminus A_1^{(\infty)} \bigr)
  + \sum_{k=2}^\infty 
  \mu_k^{(\infty)} \bigl( (B \cap A_{k-1}^{(\infty)}) \setminus A_k^{(\infty)} \bigr),
  \qquad B \in \Borel(S^\ZZ).
\]
By properties \eqref{eq:muk:zero}, \eqref{eq:muk:finite} and \eqref{eq:muellmuk}, we have
\begin{equation}
\label{eq:muinf}
  \mu^{(\infty)}( \point \setminus A_k^{(\infty)} ) 
  = \mu_k^{(\infty)}( \point ),
  \qquad k \ge 1.
\end{equation}

The measures $\mu^{(m)}$ and $\mu^{(\infty)}$ are connected through the formula
\begin{equation}
\label{eq:mummuinf}
  \mu^{(\infty)} \circ Q_m^{-1} = \mu^{(m)}, \qquad m \ge 0.
\end{equation}
Indeed, let $B \in \Borel(S^{2m+1})$ be such that $\mathbf{0}^{(m)} \notin B^-$. Then we can find $\ell \ge \max(m, 1)$ such that $\max_{j=-m,\ldots,m} d(x_j, 0) > 1/\ell$ for all $\bm{x} \in B$ and thus $A_\ell^{(\infty)} \cap Q_m^{-1}(B) = \varnothing$ and $A_\ell^{(\ell)} \cap Q_{\ell,m}^{-1}(B) = \varnothing$. Since $Q_m^{-1}(B) = Q_\ell^{-1}(Q_{\ell,m}^{-1}(B))$, we find, applying successively equations \eqref{eq:muinf}, \eqref{eq:muk}, \eqref{eq:mumk} and \eqref{eq:munmum},
\begin{align*}
  \mu^{(\infty)} \bigl( Q_m^{-1}(B) \bigr)
  &=
  \mu_\ell^{(\infty)} \bigl( Q_m^{-1}(B) \bigr) \\
  &=
  \mu_\ell^{(\ell)} \bigl( Q_{\ell,m}^{-1}(B) \bigr) \\
  &=
  \mu^{(\ell)} \bigl( Q_{\ell,m}^{-1}(B) \bigr) \\
  &=
  \mu^{(m)}(B),
\end{align*}
as required. In particular, $\mu^{(\infty)}( \{ \bm{x} : x_0 \neq 0 \} ) = \mu^{(0)}( S \setminus \{ 0 \} ) > 0$.

To prove $M_0$-convergence in Theorem~\ref{thm:main}(a), we apply Theorem~\ref{thm:hh1}. Recall the $\mu^{(m)}$-smooth Borel sets $\Borel_s(S^{2m+1})$ in \eqref{eq:Borel:s}. Define the collection $\mathcal{A} \subset \Borel( S^\ZZ )$ by
\[
  \mathcal{A} 
  = 
  \bigcup_{m=0}^\infty
  \bigl\{ 
    Q_m^{-1}(B) : B \in \Borel_s(S^{2m+1}), \, \mathbf{0}^{(m)} \notin B^- 
  \bigr\}.
\]
We show that $\mathcal{A}$ satisfies conditions (C1) and (C2) of Theorem~\ref{thm:hh1}.
\begin{enumerate}[({C}1)]
\item
Put $N_i = A_i^{(\infty)}$ for $i \in \NN$. We have $d_\infty(\bm{x}, \mathbf{0}) < 3/(1+i) + 2^{-i}$ for $\bm{x} \in N_i$. Further, let $i \in \NN$ and let $A = Q_m^{-1}(B)$ with $B \in \Borel_s( S^{2m+1} )$ and $\mathbf{0}^{(m)} \notin B^-$, so $A \in \mathcal{A}$. We need to show that $A \setminus N_i \in \mathcal{A}$ too. Put $\ell = \max(m, i)$ and note that $A = Q_\ell^{-1}(Q_{\ell,m}^{-1}(B))$ and $N_i = Q_\ell^{-1}(Q_{\ell,i}^{-1}(A_i^{(i)}))$, whence 
\[
  A \setminus N_i 
  = 
  Q_\ell^{-1} \bigl( 
    Q_{\ell,m}^{-1}(B) \setminus Q_{\ell,i}^{-1}(A_i^{(i)})
  \bigr).
\]
The set on the right-hand side is of the desired form $Q_{\ell}^{-1}(C)$ for some set $C \in \Borel_s(S^{2\ell+1})$ such that $\mathbf{0}^{(\ell)} \notin C^-$; indeed, we have for instance $\mu^{(\ell)}(\partial Q_{\ell,m}^{-1}(B)) = \mu^{(\ell)}(Q_{\ell,m}^{-1}(\partial B)) = \mu^{(m)}(\partial B) = 0$ by \eqref{eq:mumkmunk} and the fact that $\mathbf{0}^{(m)} \notin B^-$. As a consequence, $A \setminus N_i \in \mathcal{A}$ for $A \in \mathcal{A}$.
\item
Recall from the paragraph containing \eqref{eq:Borel:f} that every
open subset of $S^\ZZ$ can be written as the union of a countable
collection of open cylinders. Moreover, recall from the paragraph containing \eqref{eq:Borel:s} that every open subset of $S^{2m+1}$ can be written as a countable collection of $\mu^{(m)}$-smooth open balls, and this for arbitrary integer $m \ge 0$. As a consequence, every open subset $G$ of $S^\ZZ$ such that $\mathbf{0} \notin G^-$ can be written as a countable union of $\mathcal{A}$-sets.
\end{enumerate}

Finally, by Theorem~\ref{thm:main}(b) and the Portmanteau theorem \citep[Theorem~2.4]{HL06}, we have, for every $A = Q_m^{-1}(B) \in \mathcal{A}$ with $B \in \Borel_s(S^{2m+1})$ and $\mathbf{0}^{(m)} \notin B^-$,
\begin{align*}
  \frac{1}{V(u)} \Pr[u^{-1} \bm{X} \in A ]
  &=
  \frac{1}{V(u)} \Pr[u^{-1} \bm{X}^{(m)} \in B] \\
  &\to
  \mu^{(m)}(B)
  =
  \mu^{(\infty)}(A),
  \qquad u \to \infty,
\end{align*}
the final identity following from \eqref{eq:mummuinf}. Apply Theorem~\ref{thm:hh1} to conclude that the $M_0$-convergence stated in Theorem~\ref{thm:main}(a) holds.
\end{proof}

\section{Angular or spectral tail processes}
\label{sec:spectral-processes}

Let $\bm{X}=(X_t)_{t\in \mathbb{Z}}$ be a strictly stationary, regularly varying discrete-time stochastic process taking values in $S$. According to Theorem~\ref{thm:main}, the random variable $X_0$ is a regularly varying random element in the space $(S,d)$. With the assumption that a modulus $\rho:S\to [0,\infty)$ exists, Proposition~\ref{prop:spec00} describes the joint limit of the rescaled modulus $\rho(X_0)/u$ and the angle $\theta(X_0) = X_0 / \rho(X_0)$ given that $\rho(X_0) > u$ as $u \to \infty$. In the following theorem, we will extend this by considering the entire self-normalized process $X_t / \rho(X_0)$, $t \in \mathbb{Z}$. The theorem generalizes Theorem~2.1 in \citet{BS09} and Theorem~3.1 in \citet{SER10}. Let $\mathcal{C}_0(S^k)$ be the space of functions $S^k \setminus \{(0, \ldots, 0)\} \to \reals$ which are bounded and continuous and vanish on the complement of a neighbourhood of the origin, $(0, \ldots, 0)$, in $S^k$. Recall that the arrow $\dto$ signifies weak convergence.

\begin{theorem}
\label{thm:spectralprocess}
Let $\bm{X}=(X_t)_{t\in \mathbb{Z}}$ be a strictly stationary time series taking values in a complete, separable metric space $S$, endowed with an origin, a scalar multiplication, and a modulus $\rho$. The following properties are equivalent: 
\begin{itemize}
\item[(i)]
  $\bm{X}$ is regularly varying with index $\alpha \in (0, \infty)$.  

\item[(ii)] 
  The function $u\mapsto \Pr[\rho(X_0)>u]$ belongs to $\RV_{-\alpha}$ and there exists a random element $(\Theta_t)_{t\in \mathbb{Z}}\in S^{\mathbb{Z}}$ such that, as $u \to \infty$,
\begin{equation}
\label{eq:spp01}
\mathcal{L}((X_t/\rho(X_0))_{t\in \mathbb{Z}}\mid \rho(X_0)>u)\dto
  (\Theta_t)_{t\in \mathbb{Z}}.
\end{equation} 

\item[(iii)] 
  There exist a $\Pareto(\alpha)$ random variable $Y$ and a random element $(\Theta_t)_{t\in \mathbb{Z}}\in S^{\mathbb{Z}}$, independent of each other, such that, as $u \to \infty$,
\begin{equation}
\label{eq:spp02}
\mathcal{L}(\rho(X_0)/u, (X_t/\rho(X_0))_{t\in \mathbb{Z}}\mid
\rho(X_0)>u)\dto (Y,(\Theta_t)_{t\in \mathbb{Z}}).
\end{equation}
\end{itemize}
In this case, the law of $(\Theta_t)_{t\in \mathbb{Z}}$ is the same across (ii) and (iii), and for every integer $t$ and every positive integer $k$,
\begin{eqnarray}
\label{eq:claim1}
&& \E [ \rho(\Theta_t)^\alpha ] 
  = \lim_{r \downarrow 0} \lim_{u \to \infty} \Pr[ \rho(X_{-t}) > ru
   \mid \rho(X_0) > u ]\le 1\,,\\
\label{eq:spp03}
&&\frac{1}{\Pr[\rho(X_0)>u]} \Pr[(X_1/u,\ldots,X_k/u)\in \point]\to
\nu_k(\point)\,, \qquad u\to \infty\,,
\end{eqnarray}
in $M_0(S^k)$, where $\int f\,d\nu_k$ for $f\in \mathcal{C}_0(S^k)$ is given by
\begin{equation}
\label{eq:intfnuk}
  \sum_{i=1}^k
  \int_0^{\infty}
    \E \biggl[ 
      f(0,\ldots,0,z\Theta_0,\ldots,z\Theta_{k-i}) \, 
      \1 \biggl( \max_{1-i\le j\le -1}\rho(\Theta_j)=0 \biggr) 
    \biggr]\,
  d(-z^{-\alpha})\,.
\end{equation}
\end{theorem}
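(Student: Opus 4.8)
The plan is to reduce everything to the one–variable polar decomposition of Proposition~\ref{prop:spec00} and the finite-dimensional characterization of Theorem~\ref{thm:main}, with strict stationarity supplying the structure that knits the coordinates together. The key preliminary observation is that, although $\bm{x} \mapsto \rho(x_0)$ is \emph{not} a modulus on $S^\ZZ$ (it vanishes on nonzero sequences with $x_0 = 0$), its restriction to the scale-invariant set $U = \{ \bm{x} : \rho(x_0) > 0 \}$ behaves exactly like one: it is continuous and homogeneous, and, since $\rho$ is continuous with $\rho(0)=0$, the level set $\{ \rho(x_0) > 1 \}$ is bounded away from $\mathbf{0}$ in $(S^\ZZ, d_\infty)$. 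On each finite block the map $\bm{x} \mapsto \max_{|t| \le m} \rho(x_t)$ \emph{is} a genuine modulus on $S^{2m+1}$, precisely because only finitely many coordinates are involved. Throughout I normalize by $V(u) = \Pr[\rho(X_0) > u]$, which is legitimate: by Theorem~\ref{thm:main} with $m=0$ the element $X_0$ is regularly varying in $S$, and Lemma~\ref{lem:RV:aux} then gives $V \in \RV_{-\alpha}$ and the fact that $V$ is a valid auxiliary function for $\bm{X}$.

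\emph{The implications (i) $\Rightarrow$ (iii) $\Rightarrow$ (ii).} Under (i), with the above normalization, $\mu^{(\infty)}(\{ \rho(x_0) > 1 \}) = 1$. Since $\mu^{(\infty)}$ is homogeneous of index $-\alpha$ and $U$ is scale-invariant, the homeomorphism $\bm{x} \mapsto (\rho(x_0), \bm{x}/\rho(x_0))$ factorizes $\mu^{(\infty)}|_U$ into $\alpha r^{-\alpha-1}\,dr \otimes \tilde{H}(d\bm{\theta})$ on $(0,\infty) \times \{ \bm{\theta} : \rho(\theta_0) = 1 \}$, with $\tilde{H}$ a probability measure, exactly as in Proposition~\ref{prop:spec00}; I take $(\Theta_t)_{t \in \ZZ} \sim \tilde{H}$. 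Testing the $M_0$-convergence against the sets $\{ \rho(x_0) > \lambda, \, \bm{x}/\rho(x_0) \in B \}$ (bounded away from $\mathbf{0}$ for $\lambda \ge 1$) and rerunning the tightness and single-accumulation-point argument of Proposition~\ref{prop:spec00} yields \eqref{eq:spp02} with $Y \sim \Pareto(\alpha)$ independent of $(\Theta_t)$, that is (iii). Dropping the radial coordinate by the continuous mapping theorem gives \eqref{eq:spp01}, and its radial marginal gives $\Pr[\rho(X_0) > \point] \in \RV_{-\alpha}$, that is (ii); the law of $(\Theta_t)$ is the same in both, namely $\tilde{H}$.

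\emph{The two displayed formulas.} For \eqref{eq:claim1} I use stationarity to turn the conditioning around: since $(X_{-t}, X_0) \eqd (X_0, X_t)$,
\[
  \Pr[\rho(X_{-t}) > ru \mid \rho(X_0) > u]
  = \frac{\Pr[\rho(X_0) > ru]}{\Pr[\rho(X_0) > u]} \, \Pr[\rho(X_t) > u \mid \rho(X_0) > ru].
\]
As $u \to \infty$ the first factor tends to $r^{-\alpha}$, and, applying \eqref{eq:spp02} at threshold $ru$ together with $\rho(X_t)/(ru) = (X_t/\rho(X_0)) \cdot \rho(X_0)/(ru) \dto Y\rho(\Theta_t)$, the second tends to $\Pr[Y\rho(\Theta_t) > 1/r]$; the relevant event is a continuity set because $Y$ is absolutely continuous. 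Hence the double limit equals $\lim_{r \downarrow 0} r^{-\alpha}\Pr[Y\rho(\Theta_t) > 1/r] = \lim_{r \downarrow 0}\E[\min(r^{-\alpha}, \rho(\Theta_t)^\alpha)] = \E[\rho(\Theta_t)^\alpha]$ by monotone convergence, and the bound $\le 1$ is inherited from the conditional probabilities. For \eqref{eq:spp03}--\eqref{eq:intfnuk}, the $M_0$-convergence to some $\nu_k$ is immediate from regular variation of $\bm{X}$ (Theorem~\ref{thm:main}, projected to coordinates $1,\ldots,k$). To identify $\nu_k$, I write $\int_{S^k} f\,d\nu_k = \int_{S^\ZZ} f(x_1,\ldots,x_k)\,d\mu^{(\infty)}$ (valid as $f$ vanishes near the origin) and decompose according to the position $i \in \{1,\ldots,k\}$ of the \emph{first} nonzero coordinate among $x_1,\ldots,x_k$; on each piece I shift by $i$ using the shift-invariance $\mu^{(\infty)} \circ \sigma^{-1} = \mu^{(\infty)}$ (inherited from stationarity via the continuous mapping theorem for $M_0$-convergence) and apply the polar factorization of $\mu^{(\infty)}|_U$. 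Writing $x_j = z\Theta_{j-i}$, the condition $x_1 = \cdots = x_{i-1} = 0$ becomes exactly $\1(\max_{1-i \le j \le -1}\rho(\Theta_j) = 0)$ and $\alpha z^{-\alpha-1}\,dz = d(-z^{-\alpha})$; summing over $i$ gives \eqref{eq:intfnuk}.

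\emph{The implication (ii) $\Rightarrow$ (i), the crux.} Here I must rebuild the limiting measure from the spectral process, and by Theorem~\ref{thm:main} it suffices to produce, for each $m$, the $M_0$-limit of $\upsilon_u^{(m)} = V(u)^{-1}\Pr[u^{-1}\bm{X}^{(m)} \in \point]$. Having first upgraded (ii) to (iii) as in Proposition~\ref{prop:spec00}, I fix $f \in \mathcal{C}_0(S^{2m+1})$, choose $\delta_0 > 0$ below the vanishing radius of $f$ (so $f(u^{-1}\point)$ is supported on $\{\max_{|t| \le m}\rho(X_t) > \delta_0 u\}$), and decompose $\int f\,d\upsilon_u^{(m)}$ by the first index $\tau \in \{-m,\ldots,m\}$ with $\rho(X_\tau) > \delta_0 u$. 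Stationarity shifts each event $\{\tau = i\}$ so that the anchoring coordinate sits at the origin, after which \eqref{eq:spp02} at threshold $\delta_0 u$ lets each summand converge; the sets $\{\max_{-m-i \le j \le -1} Y\rho(\Theta_j) \le 1\}$ that appear are continuity sets for the law of $(Y,(\Theta_t))$ because $Y$ is continuous. The limit is a positive linear functional of $f$, hence represented by a Borel measure $\mu^{(m)}$ that is finite away from the origin thanks to $\E[\rho(\Theta_t)^\alpha] \le 1 < \infty$ from \eqref{eq:claim1}; as $f$ was arbitrary, $\upsilon_u^{(m)} \to \mu^{(m)}$ in $M_0$, and Theorem~\ref{thm:main} delivers (i). The main obstacle is exactly this reconstruction: the exceedance events $\{\rho(X_t) > \delta_0 u\}$ overlap, so the decomposition must be organized by the \emph{first} exceedance to avoid double counting; the interchange of $u \to \infty$ with (implicitly, when recovering the clean indicator $\1(\rho(\Theta_j)=0)$) $\delta_0 \downarrow 0$ must be controlled; and one must verify that the limiting boundary events carry no mass — all of which rest on stationarity and on the summability encoded in $\E[\rho(\Theta_t)^\alpha] \le 1$.
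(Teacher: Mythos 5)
Your proposal is correct in substance, and its hardest component --- recovering regular variation from the spectral process via a first-exceedance decomposition, with boundary events handled through the continuity and independence of $Y$ and the passage to a threshold-free limit controlled by $\E[\rho(\Theta_t)^\alpha]\le 1$ --- is exactly the paper's argument for (iii) $\Rightarrow$ (i). Your proof of \eqref{eq:claim1} (the stationarity flip of the conditioning, then monotone convergence) is also identical to the paper's, and your upgrade of (ii) to (iii) by conditioning at level $uy$ and using $V\in\RV_{-\alpha}$ is what the paper packages as Lemma~\ref{L:product}. You differ genuinely in two places. First, for (i) $\Rightarrow$ (iii) the paper never performs a polar decomposition on $S^{\ZZ}$: it defines finite-dimensional spectral measures $H_m$ on $\aleph_m$ from the measures $\mu^{(m)}$, proves convergence of finite stretches by testing against functions as in Lemma~\ref{lem:hh1}, and only then assembles $(\Theta_t)_{t\in\ZZ}$ via the Daniell--Kolmogorov theorem \citep[Theorem~53]{POL02} and \citet[Theorem~1.4.8]{VDV96}. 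You instead observe that $\bm{x}\mapsto\rho(x_0)$, though not a modulus on $S^{\ZZ}$, supports a polar factorization of $\mu^{(\infty)}$ on the scale-invariant open set $U=\{\bm{x}:\rho(x_0)>0\}$, because $\{\bm{x}:\rho(x_0)\ge 1\}$ is bounded away from $\mathbf{0}$; this produces the law of the entire spectral process in one stroke and bypasses the extension theorem, at the price of rerunning the tightness/uniqueness argument of Proposition~\ref{prop:spec00} in the infinite-dimensional space. Second, you identify $\nu_k$ in \eqref{eq:intfnuk} at the level of the limit measure, using shift-invariance of $\mu^{(\infty)}$ and a decomposition by the first \emph{nonzero} coordinate; the paper identifies it at the pre-limit level, decomposing by the first exceedance of $ur$ and letting $r\downarrow 0$ by dominated convergence. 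Your identification is cleaner and exact (no interchange of limits), but it presupposes (i), so it cannot replace the pre-limit computation inside your (ii) $\Rightarrow$ (i) step --- there you still need the paper's argument in full.

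One caveat. In (ii) $\Rightarrow$ (i) you write that the limiting functional of $f$ is ``a positive linear functional of $f$, hence represented by a Borel measure $\mu^{(m)}$.'' On a Polish space that is not locally compact, no Riesz-type representation theorem licenses this inference, so as stated this step is a gap in wording, though not in substance: the limit your first-exceedance decomposition produces is already, explicitly, the integral of $f$ against a concrete measure (the push-forward of the law of $(Y,(\Theta_t)_t)$ restricted by the no-earlier-exceedance indicator, i.e.\ the threshold-$r$ analogue of \eqref{eq:intfnuk}), and the $\delta_0\downarrow 0$ dominated-convergence step you describe --- with domination supplied by \eqref{eq:claim1} --- turns it into the threshold-free measure \eqref{eq:intfnuk}. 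Replacing the appeal to an abstract representation by this explicit formula, and then invoking \citet[Theorem~2.1]{HL06} and Theorem~\ref{thm:main}(b) $\Rightarrow$ (a), closes the only loose joint in your argument and reproduces the paper's conclusion.
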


\begin{proof}
We prove the implications (i) $\implies$ (ii) $\implies$ (iii) $\implies$ (i) \& \eqref{eq:claim1} \& \eqref{eq:spp03}.

\emph{(i) implies (ii).}
Let $\widetilde{V}$, $\widetilde{\mu}^{(\infty)}$ and $\widetilde{\mu}^{(m)}$ be the auxiliary function and the limit measures, respectively, in \eqref{eq:seqrv1} and \eqref{eq:seqrv2}. Proposition~\ref{prop:spec00} and the equation \eqref{eq:seqrv2} imply that when $m=0$, we have
\[ 
  \frac{1}{\widetilde{V}(u)}\Pr[\rho(X_0)>u]\to \widetilde{\mu}^{(0)}(\{x\in S:\rho(x)>1\}), 
  \qquad u\to \infty,
\]
the limit being finite and nonzero. Hence, the function $V(u):=\Pr[\rho(X_0)>u]$ is a valid auxiliary function for $\bm{X}$. With this choice, the limit measures are rescaled versions of the old ones:
\begin{align*}
  \mu^{(\infty)}(\point)
  &= \widetilde{\mu}^{(\infty)}(\point)/\widetilde{\mu}^{(0)}(\{x:\rho(x)>1\}),\\ 
  \mu^{(m)}(\point)
  &= \widetilde{\mu}^{(m)}(\point)/\widetilde{\mu}^{(0)}(\{x:\rho(x)>1\}).
\end{align*}

For every $\lambda>0$, the homogeneity of the measure $\mu^{(\infty)}$ and the modulus $\rho$ implies that $\mu^{(\infty)}(\{\bm{x}\in S^{\mathbb{Z}}:\rho(x_0)=\lambda \})=\lambda^{-\alpha}\mu^{(\infty)}(\{\bm{x}\in S^{\mathbb{Z}}:\rho(x_0)=1\})$. The set $\{\bm{x}\in S^{\mathbb{Z}}:\rho(x_0)=1\}$ is the boundary of the open set $\{\bm{x}\in S^{\mathbb{Z}}:\rho(x_0)>1\}$. The latter is thus a $\mu^{(\infty)}$-continuity set, and its closure, $\{\bm{x}\in S^{\mathbb{Z}}:\rho(x_0)\ge 1\}$, does not contain the origin $\bm{0}$. Similarly, we have that, for every nonnegative integer $m$, the set $\{\bm{x}^{(m)}=(x_{-m},\ldots,x_m)\in S^{2m+1}:\rho(x_0)>\lambda)\}$ is a $\mu^{(m)}$-continuity set, whose closure does not contain the origin $\bm{0}^{(m)}$.

Let $m$ be a nonnegative integer and write $k=2m+1$. Put 
\begin{equation}\label{eq:nn}
  \aleph_m=\{(\theta_{-m},\ldots,\theta_m)\in S^k:\rho(\theta_0)=1 \}
\end{equation}
and define a probability measure on $\aleph_{m}$ by
\begin{equation*}
  H_{m}(B)=\mu^{(m)}(\{\bm{x}^{(m)}=(x_{-m},\ldots,x_m)\in S^k:\rho(x_0)>1,
  \bm{x}^{(m)}/\rho(x_0)\in B \})\,, 
\end{equation*}
for Borel sets $B\subset\aleph_{m}$. Let $g:\aleph_{m}\to \mathbb{R}$ be bounded and continuous and define $f:S^k\to \mathbb{R}$ by
 \begin{equation*}
   f(x_{-m},\ldots,x_m)
   =
   g(x_{-m}/\rho(x_0),\ldots,x_m/\rho(x_0)) \, \1\{\rho(x_0)>1\},
 \end{equation*}
to be interpreted as $0$ if $x_0=0$. The function $f$ is bounded and vanishes on the set $\{ \bm{x}^{(m)} \in S^{2m+1} : \rho(x_0) \le 1 \}$, which is a closed neighbourhood of the origin $\bm{0}^{(m)}$ in $S^{2m+1}$. Moreover, it is
continous everywhere except perhaps on $\aleph_m$, which is a
$\mu^{(m)}$-null set. 
By Lemma~\ref{lem:hh1}, 
\begin{align*}
&\E[g(X_{-m}/\rho(X_0),\ldots,X_m/\rho(X_0))\mid \rho(X_0)>u]\\ 
 &\quad =
   \frac{1}{\Pr[\rho(X_0)>u]}\E[f(X_{-m}/\rho(X_0),\ldots,X_m/\rho(X_0))]\\ 
&\quad \to \int_{S^k}f\,d\mu^{(m)}=\int_{\aleph_m}g\,dH_m\,,
  \quad u\to \infty\,.
\end{align*}
If $(\Theta_{-m},\ldots,\Theta_m)$ is a random element of $\aleph_m$
with distribution $H_m$, then
\begin{equation*}
  \law\Bigl( \big(X_{-m}/\rho(X_0),\ldots,X_m/\rho(X_0)\big) \mid \rho(X_0)>u \Bigr)
  \dto (\Theta_{-m},\ldots,\Theta_m),
\end{equation*}
as $u \to \infty$. The Daniell--Kolmogorov extension theorem \citep[Theorem 53]{POL02}
yields that there exists a random element $(\Theta_t)_{t\in
  \mathbb{Z}}$ in $S^{\mathbb{Z}}$ such that, for every nonnegative integer $m$, the distribution of
$(\Theta_{-m},\ldots,\Theta_m)$ is $H_m$. Weak convergence of finite stretches characterizes weak
convergence in the product space $S^{\mathbb{Z}}$ \citep[Theorem~1.4.8]{VDV96}, and statement (ii) follows. 

\emph{(ii) implies (iii).} 
Fix a nonnegative integer $m$. Let $y\ge 1$ and let $g:\aleph_m\to \mathbb{R}$ be continous and bounded, with $\aleph_m$ as in \eqref{eq:nn}. We have 
\begin{align*}
&\E[\1\{\rho(X_0)/u>y\} \, g(X_{-m}/\rho(X_0),\ldots,X_m/\rho(X_0))\mid
  \rho(X_0)>u]\\
&\quad =\frac{\Pr[\rho(X_0)>uy]}{\Pr[\rho(X_0)>u]}
  \E[g(X_{-m}/\rho(X_0),\ldots,X_m/\rho(X_0))\mid \rho(X_0)>uy]\\ 
&\quad \to y^{-\alpha}\E[g(\Theta_{-m},\ldots,\Theta_m)]\,, \quad u\to \infty\,.
\end{align*}
In view of Lemma~\ref{L:product}, as $u\to \infty$,
\[
  \law\Bigl( (\rho(X_0)/u,X_{-m}/\rho(X_0),\ldots,X_m/\rho(X_0))\mid \rho(X_0)>u \Bigr)
  \dto (Y,\Theta_{-m},\ldots,\Theta_m),
\]
where $Y$ is a $\Pareto (\alpha)$ random variable independent of $(\Theta_{-m},\ldots,\Theta_m)$. Statement~(iii) follows. 
 
\emph{(iii) implies (i), \eqref{eq:claim1} and \eqref{eq:spp03}.} 
To prove (i), we will show that \eqref{eq:seqrv2} holds with $V(u) = \Pr[ \rho(X_0) > u ]$. The latter function belongs to $\RV_{-\alpha}$ because (iii) implies that $V(uy) / V(u) = \Pr[ \rho(X_0)/u > y \mid \rho(X_0) > u ] \to y^{-\alpha}$ as $u \to \infty$, for all $y \ge 1$. By \citet[Theorem~2.1]{HL06}, equation~\eqref{eq:seqrv2} is equivalent to the condition that for every $m\ge 0$ and every $f\in \mathcal{C}_{0}(S^{2m+1})$, we have
\begin{equation*}
\lim_{u\to \infty} \frac{1}{V(u)}\E[f(X_{-m}/u,\ldots,X_m/u)]=\int_{S^{2m+1}}f(x_{-m},\ldots,x_m)\,d\mu^{(m)}\,.
\end{equation*}
By stationarity, this limit relation is a consequence of \eqref{eq:spp03}: just replace $(X_{-m}, \ldots, X_m)$ by $(X_1, \ldots, X_k)$ with $k = 2m+1$.


We start with proving \eqref{eq:claim1}. Fix integer $t$ and real $r>0$. Put $V(u) = \Pr[\rho(X_0) > u]$. Statement (iii) implies the independence between $Y$ and $(\Theta_t)_{t\ge 0}$. Writing
\begin{equation*}
  \frac{X_t}{u}=r \frac{\rho(X_0)}{ur} \,\frac{X_t}{\rho(X_0)}\,,
\end{equation*}
we have, by stationarity and Fubini's theorem,
\begin{align*}
  \lefteqn{
    \lim_{u \to \infty} \Pr [\rho(X_{-t}) > ru \mid \rho(X_0)>u]
  } \\
  &=
    \lim_{u\to \infty}\frac{V(ru)}{V(u)} 
    \Pr [\rho(X_t)>u \mid \rho(X_0)>ru] \\
  &=
    r^{-\alpha} \Pr[rY\rho(\Theta_t)>1]
    = \E \bigg[
      r^{-\alpha}
      \int_1^{\infty} \1 \{ ry\rho(\Theta_t)>1 \} \,d(-y^{-\alpha}) 
      \bigg] \\ 
  &=
    \E \bigg[\int_r^{\infty}\1 \{z\rho(\Theta_t)>1\}\,d(-z^{-\alpha}) \bigg]
    = \E [\min\{\rho(\Theta_t),r^{-1}\}^{\alpha}].
\end{align*}
By monotone convergence, we have $\E [\min\{\rho(\Theta_t),r^{-1}\}^{\alpha}] \to \E[\rho(\Theta_t)^{\alpha}]$ as $r \downarrow 0$, whence~\eqref{eq:claim1}. 

Fix $f \in \mathcal{C}_0(S^{k})$. There exists $r_0>0$ such that $f$ vanishes on the set $\{ \bm{x} \in S^k : \max_{1 \le i \le k} \rho(x_i) \le r_0 \}$. Indeed, $f$ vanishes on a neighbourhood of the origin $(0, \ldots, 0)$ in $S^k$ and sets of the stated form constitute a neighbourhood basis of this origin, by Definition~\ref{def:radius}(iii) and by definition of the product topology.

Fix $r \in (0, r_0)$. Form a partition of $\{\max_{1\le i\le k}\rho(X_i)>ur\}$ according to the smallest index $i$ such that
$\rho(X_i)>ur$. By stationarity, we find
\begin{align*} \allowdisplaybreaks
\lefteqn{
\frac{1}{V(u)}\E[f(X_1/u,\ldots,X_k/u)]
} \\
  &= 
  \frac{1}{V(u)} 
  \E\bigg[ 
    f(X_1/u,\ldots,X_k/u) \, 
    \1\bigg( \max_{1\le i \le k} \rho(X_i/u) > r \bigg) 
  \bigg] \\
&= \frac{1}{V(u)} \sum_{i=1}^k\E\bigg[f(X_1/u,\ldots,X_k/u)\1\bigg(
 \rho(X_i)>ur\ge \max_{1\le j\le i-1}\rho(X_j) \bigg) \bigg]\\ 
&= \frac{V(ur)}{V(u)} \sum_{i=1}^k\E \bigg[f(X_1/u,\ldots,X_k/u)\1\bigg(
\max_{1\le j\le i-1}\rho(X_j)\le ur \bigg) \bigg|\,\rho(X_i)>ur \bigg]\\ 
&=\frac{V(ur)}{V(u)} \sum_{i=1}^k
  \E\bigg[f(X_{1-i}/u,\ldots,X_{k-i}/u)\1 \bigg(\max_{1-i\le j\le
  -1}\rho(X_j)<ur \bigg) \bigg|\,\rho(X_0)>ur \bigg] \,.
\end{align*}
Writing
\begin{equation*}
  \frac{X_t}{u}=r \, \frac{\rho(X_0)}{ur} \,\frac{X_t}{\rho(X_0)}\,,
\end{equation*}
we have, by \eqref{eq:spp02} and by continuity of the $\Pareto(\alpha)$ distribution,
\begin{align}
\label{eq:limitr}
\lefteqn{\lim_{u\to \infty}\frac{1}{V(u)}\E[f(X_1/u,\ldots,X_k/u)]}\\
&=
\nonumber
  r^{-\alpha} 
  \sum_{i=1}^k 
  \int_1^{\infty}
    \E \bigg[
      f(ry\Theta_{1-i},\ldots,ry\Theta_{k-i}) \,
      \1 \bigg(\max_{1-i \le j\le -1} \rho(y\Theta_j)<1 \bigg) 
    \bigg] \,
  d(-y^{-\alpha}) \\
&= 
\nonumber
  \sum_{i=1}^k 
  \int_{r}^{\infty}
    \E \biggl[
      f(z \Theta_{1-i},\ldots,z \Theta_{k-i}) \,
      \1 \biggl( \max_{1-i \le j \le -1} \rho(z\Theta_j) < r \biggr)
    \biggr] \,
  d(-z^{-\alpha}),
\end{align}
where we substitued $z = ry$. 
The final expression involves an arbitrary scalar $r \in (0, r_0)$ but, in view of the left-hand side of \eqref{eq:limitr}, it does not depend on the exact value of $r$. We show that we can take the limit as $r \downarrow 0$, obtaining~\eqref{eq:intfnuk}. To that end, we apply dominated convergence to each term $i \in \{1, \ldots, k\}$ separately. For fixed $z \in (0, \infty)$, we have, since $f$ is bounded,
\begin{multline*}
  \lim_{r \downarrow 0}
  \E \biggl[
    f(z \Theta_{1-i},\ldots,z \Theta_{k-i}) \,
    \1 \biggl( \max_{1-i \le j \le -1} \rho(z\Theta_j) < r \biggr)
  \biggr]  \\
  =
    \E \biggl[ 
      f(0,\ldots,0,z\Theta_0,\ldots,z\Theta_{k-i}) \, 
      \1 \biggl( \max_{1-i\le j\le -1}\rho(\Theta_j)=0 \biggr) 
    \biggr].
\end{multline*}
Next, we need to show that we can integrate this limit over $z \in (0, \infty)$ according to the measure $d(-z^{-\alpha})$. Since $f$ is bounded and vanishes on the set $\{ \bm{x} \in S^k : \max_{1 \le i \le k} \rho(x_i) \le r_0 \}$, there exists $c > 0$ such that
\[
  \abs{ f(\bm{x}) }
  \le
  c \, \1 \left( \max_{1 \le j \le k} \rho(x_j) \ge r_0 \right),
  \qquad
  \bm{x} \in S^k \setminus \{(0, \ldots, 0)\}.
\]
It follows that, for all $z \in (0, \infty)$ and all $r \in (0, r_0)$,
\begin{multline*}
  \lefteqn{
  \left\lvert \E \biggl[
    f(z \Theta_{1-i},\ldots,z \Theta_{k-i}) \,
    \1 \biggl( \max_{1-i \le j \le -1} \rho(z\Theta_j) < r \biggr)
  \biggr] \right\rvert 
  } \\
  \le
  c \, \Pr \left[ \max_{1 \le j \le k} \rho(z \Theta_{j-i}) \ge r_0 \right]
  \le
  c \, \sum_{j=1}^k \Pr[ \rho( \Theta_{j-i}) \ge z^{-1} r_0 ].
\end{multline*}
For any integer $t$, we have, by Fubini's theorem,
\[
  \int_0^\infty \Pr[ \rho( \Theta_t ) \ge z^{-1} r_0 ] \, d(-z^{-\alpha})
  = r_0^{-\alpha} \E[ \rho( \Theta_t)^\alpha ]
  \le r_0^{-\alpha},
\]
the inequality following from \eqref{eq:claim1}. This justifies the use of the dominated convergence theorem when passing to the limit $r \downarrow 0$ on the right-hand side of \eqref{eq:limitr}. We arrive at \eqref{eq:spp03} with limit measure $\nu_k$ as given in \eqref{eq:intfnuk}. This completes the proof of Theorem~\ref{thm:spectralprocess}.
\end{proof}

\section{The time-change formula}
\label{sec:time-change-formula}

In general, the spectral process $(\Theta_t)_{t \in \ZZ}$ of a
stationary regularly varying time series $(X_t)_{t \in \ZZ}$ is itself
nonstationary. Still, the fact that $(X_t)_{t \in \ZZ}$ is stationary
induces a peculiar structure on the distribution of the spectral
process. In particular, the distribution of $(\Theta_t)_{t \in \ZZ}$
is determined by the distribution of its restriction to the
nonnegative time axis, that is, of the \emph{forward spectral process}
$(\Theta_t)_{t \in \ZZ_+}$, with $\ZZ_+ = \{0, 1, 2, \ldots \}$. The
same is true for the \emph{backward spectral process} $(\Theta_t)_{t
\in \ZZ_-}$, with $\ZZ_- = \{0, -1, -2, \ldots \}$.

\begin{theorem}\label{thm:timechange}
Statements (ii) and (iii) in Theorem \ref{thm:spectralprocess} are
equivalent to the statements with $\mathbb{Z}$ replaced by
$\mathbb{Z}_+$ or $\mathbb{Z}_-$. In that case,
\begin{equation}
  \label{eq:timechange}
  \E[f(\Theta_{-s},\ldots,\Theta_t)]
  =
  \E\bigg[
    f \bigg(\frac{\Theta_0}{\rho(\Theta_s)},\ldots,\frac{\Theta_{t+s}}{\rho(\Theta_s)} \bigg) \,
    \rho(\Theta_s)^{\alpha}
  \bigg]  
\end{equation}
for all nonnegative integers $s$ and $t$ and for all integrable functions $f:S^{t+s+1}\to \mathbb{R}$ with the property that $f(\theta_{-s},\ldots,\theta_t)=0$ whenever $\theta_{-s}=0$.   
\end{theorem}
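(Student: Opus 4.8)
The plan is to treat the two assertions of the theorem separately. The implications $\mathbb{Z}\Rightarrow\mathbb{Z}_+$ and $\mathbb{Z}\Rightarrow\mathbb{Z}_-$ are immediate: the coordinate projection $S^{\mathbb{Z}}\to S^{\mathbb{Z}_+}$ is continuous, so restricting a convergent family of conditional laws to the coordinates indexed by $\mathbb{Z}_+$ preserves the weak convergence in \eqref{eq:spp01}--\eqref{eq:spp02}. What remains is the reverse implication $\mathbb{Z}_+\Rightarrow\mathbb{Z}$ (the case $\mathbb{Z}_-$ being its time-reversed mirror) together with the formula \eqref{eq:timechange}, which I prove assuming full regular variation, so that Theorem~\ref{thm:spectralprocess}, and in particular the moment bound \eqref{eq:claim1}, is at my disposal.

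For \eqref{eq:timechange} I fix $s,t\ge0$ and work with the limit measure $\mu_W$ of the block $(X_{-s},\ldots,X_t)$ on $S^{t+s+1}$; by stationarity this measure is the same for every block of length $t+s+1$, and it is $(-\alpha)$-homogeneous. The engine is a disintegration anchored at a chosen coordinate $c$: for $\psi$ bounded continuous and vanishing on $\{\rho(x_c)\le\eps\}$, I use that $\psi(u^{-1}(X_{-s},\ldots,X_t))$ vanishes unless $\rho(X_c)>\eps u$, peel off $V(\eps u)/V(u)\to\eps^{-\alpha}$, apply the tail-process limit anchored at $c$ (which by stationarity is the anchor-$0$ limit shifted, so the $j$-th coordinate converges to $Y\Theta_{j-c}$), and finally use homogeneity to extend the radial integral to $(0,\infty)$. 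This yields
$$\int\psi\,d\mu_W=\int_0^\infty\E\big[\psi\big((r\Theta_{j-c})_{j=-s}^{t}\big)\big]\,\alpha r^{-\alpha-1}\,dr,$$
which I shall use with $c=0$ and with $c=-s$.

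The formula then comes from feeding one test function into both versions. Assume first that $f$ vanishes on $\{\rho(\theta_{-s})\le\delta\}$ for some $\delta>0$, and set $\psi(x_{-s},\ldots,x_t)=\phi(\rho(x_0))\,f\big(x_{-s}/\rho(x_0),\ldots,x_t/\rho(x_0)\big)$, where $\phi$ is continuous, supported in $[\eps,\infty)$, and normalised by $\int_0^\infty\phi(r)\,\alpha r^{-\alpha-1}\,dr=1$. On the support of $\psi$ one has $\rho(x_0)\ge\eps$ and $\rho(x_{-s})>\delta\rho(x_0)\ge\delta\eps$, so $\psi$ vanishes near both $\{\rho(x_0)=0\}$ and $\{\rho(x_{-s})=0\}$, and the disintegration applies for $c=0$ and $c=-s$ simultaneously. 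The $c=0$ version collapses, using $\rho(r\Theta_0)=r$, to $\E[f(\Theta_{-s},\ldots,\Theta_t)]$; the $c=-s$ version gives $\int_0^\infty\E[\phi(r\rho(\Theta_s))\,f(\Theta_0/\rho(\Theta_s),\ldots,\Theta_{t+s}/\rho(\Theta_s))]\,\alpha r^{-\alpha-1}\,dr$, and the substitution $w=r\rho(\Theta_s)$ together with Fubini turns the inner integral into $\rho(\Theta_s)^\alpha$. Equating the two expressions gives \eqref{eq:timechange} for such $f$; letting $\delta\downarrow0$ and dominating the right-hand side by $\norm{f}_\infty\rho(\Theta_s)^\alpha$, which is integrable by \eqref{eq:claim1}, extends it to every $f$ vanishing merely at $\theta_{-s}=0$, and a routine approximation removes the continuity and boundedness assumptions.

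For $\mathbb{Z}_+\Rightarrow\mathbb{Z}$ I argue through finite blocks and Theorem~\ref{thm:main}. A stationarity relabelling shows that regular variation of the forward block $(X_0,\ldots,X_{2m})$ forces that of the two-sided block $(X_{-m},\ldots,X_m)$, since the two have identical laws; hence it suffices to deduce regular variation of every forward block from forward spectral convergence. This is exactly the computation proving (iii)$\Rightarrow$(i) and \eqref{eq:spp03} in Theorem~\ref{thm:spectralprocess}: one partitions according to the first exceedance, shifts the anchor to the origin, and passes to the limit. The point to verify is that this computation is driven entirely by forward data: the coordinates lying before the anchor enter only through the constraint that their modulus be small, they are sent to the origin in the limit $r\downarrow0$ by the metric control $\{\rho<r\}\subset\{d(\cdot,0)<z_r\}$ with $z_r\to0$, and the probability that such a coordinate fails to be negligible is, by stationarity, a backward tail that flips into a forward one, $\Pr[\rho(X_{-t})>\lambda u\mid\rho(X_0)>u]\le V(\lambda u)/V(u)\to\lambda^{-\alpha}$, hence controllable. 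I expect this forward-only control of the pre-anchor (backward) coordinates to be the main obstacle: one must show both that they become asymptotically negligible and that the resulting limit measure is the forward-determined one, the bound $\E[\rho(\Theta_t)^\alpha]\le1$ of \eqref{eq:claim1} being the quantitative input that makes the interchange of $u\to\infty$ and $r\downarrow0$ legitimate.
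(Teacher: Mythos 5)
Your derivation of \eqref{eq:timechange} itself is correct and follows a genuinely different route from the paper's. You anchor the $(-\alpha)$-homogeneous limit measure of the block $(X_{-s},\ldots,X_t)$ at the two coordinates $c=0$ and $c=-s$, feed the single test function $\phi(\rho(x_0))\,f(x_{-s}/\rho(x_0),\ldots,x_t/\rho(x_0))$ into both disintegrations, and equate; the $\delta\downarrow 0$ and measure-theoretic extensions are routine, with \eqref{eq:claim1} supplying the domination and the Fubini step. The paper instead obtains the formula as a by-product of identifying sequential limits (its Lemma~\ref{CL:uniqueness}, proved by induction) followed by a telescoping-sum argument; your double-anchoring is arguably cleaner, and it is legitimate because \eqref{eq:timechange} is only asserted once the equivalent conditions hold, so Theorem~\ref{thm:spectralprocess} is available.

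The gap is in the direction $\ZZ_+\Rightarrow\ZZ$, which is the real content of the theorem. Your reduction to forward blocks via stationarity and Theorem~\ref{thm:main} is fine, but the mechanism you propose for passing to the limit in the first-exceedance decomposition under (iii$_+$) alone is wrong: the pre-anchor coordinates do \emph{not} become asymptotically negligible. The $i$-th term is
\begin{equation*}
  \E \biggl[ f(X_{1-i}/u,\ldots,X_{k-i}/u) \,
  \1 \biggl( \max_{1-i\le j\le -1}\rho(X_j)<ur \biggr) \,\bigg|\, \rho(X_0)>ur \biggr],
\end{equation*}
and while the pre-anchor \emph{arguments} of $f$ are indeed forced within $z_r$ of the origin, the \emph{indicator} is a nontrivial functional of the pre-anchor coordinates whose effect survives the limit: in \eqref{eq:intfnuk} it appears as $\1(\max_{1-i\le j\le -1}\rho(\Theta_j)=0)$, an event of probability strictly less than one whenever $\Pr[\Theta_{-1}\ne 0]>0$ (any moving average, say). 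Your quantitative control also fails: the flipped bound gives $\Pr[\rho(X_{-t})>ru\mid\rho(X_0)>u]\le V(ru)/V(u)\to r^{-\alpha}$, which blows up as $r\downarrow 0$, and indeed by \eqref{eq:claim1} the exact limit is $\E[\min\{\rho(\Theta_t),r^{-1}\}^{\alpha}]\to\E[\rho(\Theta_t)^{\alpha}]$, which is not zero. More fundamentally, a marginal bound cannot deliver what is required, namely the \emph{joint} limiting law, given the anchor exceedance, of the pre-anchor constraint together with the forward functional --- exactly the information (iii$_+$) does not contain. This is the hole the paper's proof is built to fill: Lemma~\ref{CL:minust} yields tightness of the backward--forward conditional laws, and Lemma~\ref{CL:uniqueness} identifies all sequential limits by induction on $s$, the key move being the splitting $f=f(0,\theta_{-s+1},\ldots,\theta_t)+f_0$ with $f_0$ vanishing at $\theta_{-s}=0$, followed by a stationarity flip of the anchor to the \emph{most backward} coordinate $X_{-s}$, after which every coordinate involved is forward and (iii$_+$) applies. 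Some such device (this induction, or an inclusion--exclusion over backward exceedance patterns, each term flipped to its earliest exceedance) is indispensable; ``negligibility plus \eqref{eq:claim1}'' is not a proof.
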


By `integrable functions' is meant real-valued, Borel-measurable functions such that one of the expectations, and hence the other one, exists.
In \eqref{eq:timechange} and in later formulas in which expressions like $\rho(\Theta_s)$ appear both in the denominator and as a term in a product, the integrand is to be interpreted as zero when $\rho(\Theta_s)$ is zero. A time-change formula for general integrable functions, without the zero-property, is given in \eqref{eq:timechange:noZero} inside the proof of Theorem~\ref{thm:timechange}.

By considering the time-reversed process $\widetilde{X}_t =
 X_{-t}$, equation~\eqref{eq:timechange} can be reversed in the
 obvious way. A simple case occurs when $f$ only depends on its first
 component, that is, when $f(\theta_{-s}, \ldots, \theta_t) \equiv
 f(\theta_{-s})$ and $f(0) = 0$: equation~\eqref{eq:timechange} then reduces
 to 
\begin{equation}
\label{E:time:s}
  \E [ f(\Theta_{-s}) ] = \E [ f ( \Theta_0 / \rho(\Theta_s) ) \,
  \rho(\Theta_s)^\alpha ], \qquad s \in \ZZ. 
\end{equation}
This yields an expression of the distribution of $\Theta_{-s}$ in terms of the joint law of $\Theta_0$ and $\Theta_s$. In particular, we find
\[
  \Pr[ \Theta_{-s} \ne 0 ] = \E [ \rho(\Theta_s)^\alpha ], \qquad s \in \ZZ.
\]
If the common value in the preceding display is equal to unity, then
\eqref{E:time:s} is valid for arbitrary integrable $f$, that is,
without the restriction that $f(0) = 0$.  

\begin{proof}[Proof of Theorem~\ref{thm:timechange}]
By symmetry, we only need to consider the forward case, $\mathbb{Z}_+=\{0,1,2,\ldots\}$. Consider the statements (ii) and (iii) in Theorem~\ref{thm:spectralprocess} with $\mathbb{Z}$ replaced by $\mathbb{Z}_+$. 
\begin{itemize}
\item[(ii$_+$)] 
  The function $u \mapsto \Pr[\rho(X_0) > u]$ belongs to $\RV_{-\alpha}$, and in $S^{\ZZ_+}$,
  \[ 
    \law\biggl( 
      (X_t / \rho(X_0))_{t \in \ZZ_+} \, \big| \,  \rho(X_0) > u
    \biggr) 
    \dto (\Theta_t)_{t \in \ZZ_+} \qquad (u \to \infty). 
  \]

\item[(iii$_+$)] 
  In $(0, \infty) \times S^{\ZZ_+}$, as $u \to \infty$,
  \[
    \law \biggl(  
      \rho(X_0) / u, (X_t / \rho(X_0))_{t \in \ZZ_+} \, \big| \,  \rho(X_0) > u \biggr) 
    \dto \bigl( Y, (\Theta_t)_{t \in \ZZ_+} \bigr),
  \] 
  where $Y$ is a $\Pareto(\alpha)$ random variable independent from $(\Theta_t)_{t \in \ZZ_+}$.
\end{itemize}
We have to show that the statements (i)--(iii) in Theorem~\ref{thm:spectralprocess} are equivalent with each of (ii$_+$) and (iii$_+$). We already know that (i) implies (ii). Trivially, (ii) implies (ii$_+$). To show that (ii$_+$) implies (iii$_+$), just set $s = 0$ in the part of the proof of Theorem~\ref{thm:spectralprocess} that (ii) implies (iii). Since (iii) implies (i) by Theorem~\ref{thm:spectralprocess}, all that remains to be shown is that (iii$_+$) implies (iii). 

The proof of \eqref{eq:claim1} in Theorem~\ref{thm:spectralprocess}
ensures that if (iii$_+$), then for every $t \in \ZZ_+$, \eqref{eq:claim1} holds.

\begin{lemma}\label{CL:minust}
  If (iii$_+$), then for every $t \in \mathbb{Z}_+$,
  \begin{equation*}
    \law \left( X_{-t}/\rho(X_0) \mid \rho(X_0) > u \right) \dto \nu_t\,, \quad u\to \infty\,,
  \end{equation*}
where $\nu_t$ is a probability measure on $S$ given for
$\nu_t$-integrable $g:S\to \mathbb{R}$ by 
\begin{equation*}
  \int g\,d\nu_t
  =
  g(0) \left\{ 1-\E[\rho(X_0)^{\alpha}] \right\}
  +
  \E[ g(\Theta_0/\rho(\Theta_t)) \, \rho(\Theta_t)^{\alpha}]\,.
\end{equation*}
\end{lemma}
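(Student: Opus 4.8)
The plan is to prove the asserted weak convergence by the portmanteau theorem: since $\nu_t$ is a probability measure on the Polish space $S$, it suffices to show that $a_u := \E[g(X_{-t}/\rho(X_0)) \mid \rho(X_0) > u] \to \int g\,d\nu_t$ for every bounded continuous $g : S \to \reals$. Fix such a $g$ and a $t \in \mathbb{Z}_+$, and write $V(u) = \Pr[\rho(X_0) > u]$. For a truncation level $r \in (0, \infty)$ I would split $a_u$ into a \emph{far} part, carried by the event $\{\rho(X_{-t}) > ru\}$, and a \emph{near} part, carried by $\{\rho(X_{-t}) \le ru\}$, evaluate each in the limit $u \to \infty$ for fixed $r$, and only then let $r \downarrow 0$.

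For the far part, strict stationarity gives $(X_{-t}, X_0) \eqd (X_0, X_t)$ (both $0$ and $t$ are forward indices, so this is exactly where the backward coordinate is converted into forward ones accessible to (iii$_+$)), whence
\[
  \E\bigl[g(X_{-t}/\rho(X_0))\,\1\{\rho(X_{-t})>ru\}\,\1\{\rho(X_0)>u\}\bigr]
  = \E\bigl[g(X_0/\rho(X_t))\,\1\{\rho(X_0)>ru\}\,\1\{\rho(X_t)>u\}\bigr].
\]
Dividing by $V(u)$ and factoring out $V(ru)/V(u)$ rewrites the far part as $\tfrac{V(ru)}{V(u)}\,\E[g(X_0/\rho(X_t))\,\1\{\rho(X_t)>u\}\mid \rho(X_0)>ru]$. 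By homogeneity, $X_0/\rho(X_t) = \rho(X_t/\rho(X_0))^{-1}(X_0/\rho(X_0))$ and $\rho(X_t)/(ru) = (\rho(X_0)/(ru))\,\rho(X_t/\rho(X_0))$, so the conditional expectation is that of a bounded functional of the triple $(\rho(X_0)/(ru),\,X_0/\rho(X_0),\,X_t/\rho(X_0))$ given $\rho(X_0) > ru$. By (iii$_+$) this triple converges in distribution to $(Y, \Theta_0, \Theta_t)$, and the functional $(y, \vartheta_0, \vartheta_t) \mapsto g(\rho(\vartheta_t)^{-1}\vartheta_0)\,\1\{y\rho(\vartheta_t) > 1/r\}$, read as $0$ when $\rho(\vartheta_t) = 0$, is bounded and continuous off $\{y\rho(\vartheta_t) = 1/r\}$, a set charged with zero mass by the limit law since $Y$ is continuous and independent of $(\Theta_s)$. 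Combining with $V(ru)/V(u) \to r^{-\alpha}$ and Fubini over $Y \sim \Pareto(\alpha)$ (so that $\Pr[rY\rho(\Theta_t) > 1 \mid (\Theta_s)] = \min\{1, (r\rho(\Theta_t))^\alpha\}$) yields
\[
  \lim_{u\to\infty}(\text{far})
  = \E\bigl[g(\Theta_0/\rho(\Theta_t))\,\min\{r^{-\alpha}, \rho(\Theta_t)^\alpha\}\bigr] =: b(r).
\]

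For the near part, on $\{\rho(X_0) > u,\ \rho(X_{-t}) \le ru\}$ homogeneity gives $\rho(X_{-t}/\rho(X_0)) < r$, so $X_{-t}/\rho(X_0)$ lies in the ball $\{x : \rho(x) < r\}$. Writing the near contribution as $g(0)\Pr[\rho(X_{-t}) \le ru \mid \rho(X_0) > u]$ plus a remainder, the remainder is bounded in absolute value, uniformly in $u$, by $\omega(r) := \sup\{\abs{g(x) - g(0)} : \rho(x) < r\}$; since the sets $\{\rho(x) < r\}$ form a neighbourhood base at the origin (Definition~\ref{def:radius}(iii)) and $g$ is continuous there, $\omega(r) \to 0$ as $r \downarrow 0$. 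Moreover, the computation in the proof of \eqref{eq:claim1} gives $\Pr[\rho(X_{-t}) > ru \mid \rho(X_0) > u] \to \E[\min\{\rho(\Theta_t), r^{-1}\}^\alpha]$, so $\Pr[\rho(X_{-t}) \le ru \mid \rho(X_0) > u] \to c(r) := 1 - \E[\min\{\rho(\Theta_t), r^{-1}\}^\alpha]$.

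Adding the two parts, for each fixed $r$ one obtains $\limsup_{u\to\infty}\abs{a_u - b(r) - g(0)c(r)} \le \omega(r)$. As $r \downarrow 0$, dominated convergence (using $\abs{g} \le \norm{g}_\infty$ and $\min\{r^{-\alpha}, \rho(\Theta_t)^\alpha\} \uparrow \rho(\Theta_t)^\alpha$, integrable because $\E[\rho(\Theta_t)^\alpha] \le 1$ by \eqref{eq:claim1}) gives $b(r) \to \E[g(\Theta_0/\rho(\Theta_t))\,\rho(\Theta_t)^\alpha]$, and monotone convergence gives $c(r) \to 1 - \E[\rho(\Theta_t)^\alpha]$. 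Since $a_u$ does not depend on $r$, the bound $\limsup_u \abs{a_u - L} \le \omega(r) + \abs{b(r) + g(0)c(r) - L}$ holds for every $r$, with $L = g(0)\{1 - \E[\rho(\Theta_t)^\alpha]\} + \E[g(\Theta_0/\rho(\Theta_t))\,\rho(\Theta_t)^\alpha]$; letting $r \downarrow 0$ forces $a_u \to L = \int g\,d\nu_t$, and taking $g \equiv 1$ confirms $\nu_t(S) = 1$. I expect the main obstacle to be the continuous-mapping step in the far part, namely legitimising the application of (iii$_+$) to a functional carrying both the division by $\rho(\vartheta_t)$ and the indicator boundary and checking that the limit law charges neither discontinuity set; the uniform-in-$u$ control of the near remainder through the neighbourhood-base property of $\rho$, and the careful interchange of the iterated limits $u \to \infty$ and $r \downarrow 0$, are the accompanying technical points.
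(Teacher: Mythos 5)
Your proof is correct and takes essentially the same route as the paper's: the identical decomposition into a near part $\{\rho(X_{-t})\le ru\}$ (handled via continuity of $g$ at the origin, the neighbourhood-base property of $\{x:\rho(x)<r\}$, and the limit from \eqref{eq:claim1}) and a far part $\{\rho(X_{-t})>ru\}$ (handled via stationarity, (iii$_+$) with the continuous-mapping theorem, and Fubini over $Y$), followed by the same interchange of the limits $u\to\infty$ and $r\downarrow 0$. Note that the constant $\E[\rho(\Theta_t)^\alpha]$ your argument produces is indeed what the proof yields; the $\E[\rho(X_0)^\alpha]$ appearing in the lemma's statement is evidently a typo for $\E[\rho(\Theta_t)^\alpha]$, since $\rho(X_0)$ has a regularly varying tail of index $\alpha$ and so its $\alpha$-th moment is generally infinite.
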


\begin{proof}[Proof of Lemma~\ref{CL:minust}]
  Let $g:S\to \mathbb{R}$ be continuous and bounded. Fix $r>0$. We
  have
\begin{align*}
&\E [g(X_{-t}/\rho(X_0))\mid \rho(X_0)>u]\\
&= g(0)\Pr [\rho(X_{-t})\le ru\mid \rho(X_0)>u]\\
&\quad + \E [\{g(X_{-t}/\rho(X_0))-g(0)\} \, \1 (\rho(X_{-t})\le ru)\mid \rho(X_0)>u]\\ 
&\quad +\E [g(X_{-t}/\rho(X_0)) \, \1 (\rho(X_{-t})>ru)\mid \rho(X_0)>u]\\ 
&=Q_1+Q_2+Q_3 \,.
\end{align*}
The first term $Q_1$ on the right-hand side has been treated in
\eqref{eq:claim1}. If $\rho(X_0)>u$ and $\rho(X_{-t})\le ru$, then
$\rho(X_{-t}/\rho(X_0))=\rho(X_{-t})/\rho(X_0)<r$. Recall that there
exist positive scalars $(z_r)_{r>0}$ such that 
  $\{x:\rho(x)<r\}\subset \{ x:d(x,0)<z_r\}$ and $\lim_{r\downarrow 
    0}z_r=0$. Since $g$ is continuous,
\begin{equation*}
  \lim_{r\downarrow 0}\limsup_{u\to \infty} |Q_2|
  \le \lim_{r\downarrow 0} \sup_{x : \rho(x) < r} |g(x)-g(0)|
  = 0\,.
\end{equation*}
For $Q_3$, writing $V(u)=\Pr[\rho(X_0)>u]$, we have, by stationarity
of $\bm{X}$,
\begin{align*}
Q_3
&= 
  \frac{V(ru)}{V(u)} 
  \E[g(X_0/\rho(X_t)) \, \1 (\rho(X_t)>u) \mid \rho(X_0)>ru]\\ 
&= 
  \frac{V(ru)}{V(u)} 
  \E \bigg[
  g\bigg(\frac{X_0/\rho(X_0)}{\rho(X_t)/\rho(X_0)} \bigg) \, 
  \1 \bigg(r \, \frac{\rho(X_0)}{ru} \, \frac{\rho(X_t)}{\rho(X_0)} >1 \bigg) 
  \,\bigg|\,
  \rho(X_0) > ru 
  \bigg] \\ 
&\to 
  r^{-\alpha} \E [g(\Theta_0/\rho(\Theta_t)) \, \1 (rY\rho(\Theta_t)>1)]\,,
  \qquad u\to \infty\,.
\end{align*}
The last step is justified by (iii$_+$), which implies the continuity of the
law of $Y$ and the independence between $Y$ and $\Theta_t$. Moreover,
this limit relation holds for every $r > 0$ in a neighborhood of zero. The
limit is equal to $\E[g(\Theta_0/\rho(\Theta_t))\min
\{\rho(\Theta_t),r^{-1}\}^{\alpha}]$, which, by dominated convergence,
tends to $\E [g(\Theta_0/\rho(\Theta_t)) \, \rho(\Theta_t)^{\alpha}]$ as
$r\downarrow 0$. Therefore, Lemma~\ref{CL:minust} is established. 
\end{proof}

Fix nonnegative integer $s$ and $t$. If (iii$_+$), then in view of
Lemma~\ref{CL:minust}, the converse half of Prohorov's theorem
\citep[Theorem~5.2]{Billingsley:1999uc} and Tychonoff's theorem, there exists $u_0 > 0$ such that the collection of probability measures
\begin{equation}
\label{E:laws:u}
  \mathcal{L}\bigl( X_{-s} / \rho(X_0), \ldots, X_t /\rho(X_0) \mid \rho(X_0) > u \bigr), \qquad u > u_0,
\end{equation}
is \emph{tight}, that is, for every $\eps > 0$ there exists a compact
subset $K_\eps$ of $S^{t+s+1}$ so that the probability mass of
$K_\eps$ under each of the laws above is at least $1 - \eps$. By the
direct half of Prohorov's theorem \citep[Theorem~5.1]{Billingsley:1999uc}, the collection of probability
measures above is \emph{relatively compact}: for every sequence $u_n
\to \infty$ there exists a subsequence $u_{n_m} \to \infty$ for which
the laws have a limit in distribution. To prove convergence in
distribution of \eqref{E:laws:u} as $u \to \infty$, it is then
sufficient to show uniqueness of the possible sequential limits. As
probability distributions are determined by their integrals of
bounded, Lipschitz continuous functions \citep[proof of Theorem~1.2]{Billingsley:1999uc}, it is sufficient to show the 
following lemma. 

\begin{lemma}
\label{CL:uniqueness}
If (iii$_+$), then for every nonnegative integer $s$ and $t$ and for every bounded, Lipschitz continuous function $f : S^{t+s+1} \to \mathbb{R}$, the following limit exists:
\begin{equation}
\label{E:limit2exist}
  \lim_{u \to \infty} \E [ f( X_{-s}/ \rho(X_0), \ldots, X_t / \rho(X_0)) \mid \rho(X_0) > u].
\end{equation}
\bgroup
If moreover $f(\theta_{-s}, \ldots, \theta_t) = 0$ as soon as $\theta_{-s} = 0$, then the limit is equal to
\begin{equation}
\label{E:TimeChange}
  \E \biggl[
    f \biggl( \frac{\Theta_0}{\rho(\Theta_s)},\ldots,\frac{\Theta_{t+s}}{\rho(\Theta_s)} \biggr) \, 
    \rho(\Theta_s)^{\alpha} 
  \biggr].
\end{equation}
\egroup
\end{lemma}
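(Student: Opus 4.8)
The plan is to establish both assertions first under the extra hypothesis that $f(\theta_{-s},\ldots,\theta_t)=0$ whenever $\theta_{-s}=0$, obtaining simultaneously the existence of \eqref{E:limit2exist} and its value \eqref{E:TimeChange}; the general case then follows by a short reduction. The whole argument is a multivariate elaboration of the treatment of the term $Q_3$ in the proof of Lemma~\ref{CL:minust}. For the vanishing case, fix a small $r>0$ and split $\E[f(X_{-s}/\rho(X_0),\ldots,X_t/\rho(X_0))\mid\rho(X_0)>u]$ according to whether $\rho(X_{-s})>ru$ or $\rho(X_{-s})\le ru$. On the event $\{\rho(X_{-s})\le ru,\ \rho(X_0)>u\}$ the first normalized coordinate $\theta_{-s}:=X_{-s}/\rho(X_0)$ satisfies $\rho(\theta_{-s})<r$, hence $d(\theta_{-s},0)<z_r$ by the modulus property. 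Since $f$ is Lipschitz and vanishes when its first argument is $0$, the contribution of this event is at most a constant multiple of $z_r$, uniformly in $u$, and $z_r\downto 0$ as $r\downto 0$. This is the analogue of the term $Q_2$ in Lemma~\ref{CL:minust}.

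For the event $\{\rho(X_{-s})>ru\}$ I would invoke stationarity, shifting all indices by $s$, together with the factor $V(ru)/V(u)$, exactly as for $Q_3$, to rewrite the main term as
\[
  \frac{V(ru)}{V(u)}\,\E\Bigl[f\bigl(X_0/\rho(X_s),\ldots,X_{t+s}/\rho(X_s)\bigr)\,\1(\rho(X_s)>u)\,\Bigm|\,\rho(X_0)>ru\Bigr].
\]
With $\theta_j:=X_j/\rho(X_0)$ the integrand is a functional of $(\rho(X_0)/(ru),(\theta_j)_{j\ge 0})$: its arguments are $\theta_j/\rho(\theta_s)$ and its indicator is $\1\bigl(r\,\tfrac{\rho(X_0)}{ru}\,\rho(\theta_s)>1\bigr)$. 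Since $ru\to\infty$, applying (iii$_+$) and the continuous mapping theorem yields the limit $r^{-\alpha}\,\E\bigl[f(\Theta_0/\rho(\Theta_s),\ldots,\Theta_{t+s}/\rho(\Theta_s))\,\1(rY\rho(\Theta_s)>1)\bigr]$, where $Y$ is $\Pareto(\alpha)$ and independent of $(\Theta_j)$.

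Integrating $Y$ out by independence and Fubini, this equals $\E\bigl[f(\Theta_0/\rho(\Theta_s),\ldots)\,\min\{r^{-\alpha},\rho(\Theta_s)^\alpha\}\bigr]$, which increases to $\E\bigl[f(\Theta_0/\rho(\Theta_s),\ldots,\Theta_{t+s}/\rho(\Theta_s))\,\rho(\Theta_s)^\alpha\bigr]$ by dominated convergence, the dominating function being integrable because $\E[\rho(\Theta_s)^\alpha]\le 1$ by \eqref{eq:claim1} and $f$ is bounded. Together with the uniform $O(z_r)$ bound on the truncation error, this squeezes $\limsup_u$ and $\liminf_u$ of the original conditional expectation to the common value \eqref{E:TimeChange}, proving both existence and the formula in the vanishing case. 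For an arbitrary bounded Lipschitz $f$, existence of \eqref{E:limit2exist} then follows by induction on $s$ (with $t$ free): the base case $s=0$ is immediate from (iii$_+$), since only nonnegative indices appear. For the step, write $f=g+h$ with $g(\theta_{-s},\ldots,\theta_t)=f(\theta_{-s},\ldots,\theta_t)-f(0,\theta_{-s+1},\ldots,\theta_t)$, which is bounded Lipschitz and vanishes at $\theta_{-s}=0$ (so the first case applies), and $h=f(0,\theta_{-s+1},\ldots,\theta_t)$, which is a bounded Lipschitz function of the shorter stretch $(\theta_{-s+1},\ldots,\theta_t)$ handled by the induction hypothesis.

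The delicate point is the continuous-mapping step: one must check that the functional above, interpreted as $0$ when $\rho(\theta_s)=0$, is bounded and continuous almost everywhere with respect to the limit law. Its only genuine discontinuities lie on $\{rY\rho(\Theta_s)=1\}$ with $\rho(\Theta_s)>0$, a null set because $Y$ is continuous and independent of $(\Theta_j)$; where $\rho(\theta_s)\to 0$ the indicator forces the functional to vanish, so no discontinuity arises from the division by $\rho(\theta_s)$. The second subtlety is the justification of the double limit $u\to\infty$ then $r\downto 0$, which rests on the uniform-in-$u$ truncation bound of the first paragraph together with the integrability $\E[\rho(\Theta_s)^\alpha]\le 1$.
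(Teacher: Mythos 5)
Your proposal is correct and follows essentially the same route as the paper's proof: the same truncation at $\rho(X_{-s})\le ru$ with the Lipschitz bound $Lz_r$, the same stationarity shift producing the factor $V(ru)/V(u)$, the same application of (iii$_+$) and integration of the $\Pareto(\alpha)$ variable to get $\min\{\rho(\Theta_s),r^{-1}\}^{\alpha}$, dominated convergence via \eqref{eq:claim1} as $r \downarrow 0$, and the same induction on $s$ with the decomposition $f = f_0 + f(0,\cdot)$. Reorganizing the argument so that the vanishing case is proved first and the induction only handles the general case is a purely presentational change (the paper's proof of \eqref{eq:f0:limit} likewise does not use the induction hypothesis), and your explicit verification of the continuous-mapping step is a welcome detail the paper leaves implicit.
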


\begin{proof}[Proof of Lemma~\ref{CL:uniqueness}]
We fix an integer $t \ge 0$ and proceed by induction on the integer $s \ge 0$. The case $s=0$ is already included in (ii$_+$) or (iii$_+$). Note that $\rho(\Theta_0) = 1$ with probability one.

Let $s\ge 1$ be an integer and assume the stated convergence holds for $s$ replaced by $s-1$, and all bounded, Lipschitz continuous functions from $S^{t+(s-1)+1}$ into $\mathbb{R}$. Let $f:S^{t+s+1}\to \mathbb{R}$ be bounded and Lipschitz continuous with Lipschitz constant $L>0$. Define $f_0:S^{t+s+1}\to \mathbb{R}$ by
\begin{equation}
\label{eq:f0}
  f_0(\theta_{-s},\ldots,\theta_t)=f(\theta_{-s},\ldots,\theta_t)-f(0,\theta_{-s+1},\ldots,\theta_t)\,.
\end{equation}
We have
\begin{align*}
&\E [f(X_{-s}/\rho(X_0),\ldots, X_t/\rho(X_0))\mid \rho(X_0)>u]\\
&\quad =\E [f(0,X_{-s+1}/\rho(X_0),\ldots,X_t/\rho(X_0))\mid \rho(X_0)>u]\\
&\qquad + \E [f_0(X_{-s}/\rho(X_0),\ldots,X_t/\rho(X_0))\mid \rho(X_0)>u].
\end{align*}
By the induction hypothesis, the following limit already exists:
\begin{equation*}
  \lim_{u\to \infty}\E [f(0,X_{-s+1}/\rho(X_0),\ldots,X_t/\rho(X_0))\mid \rho(X_0)>u].
\end{equation*}
\bgroup
We will show that
\begin{multline}
\label{eq:f0:limit}
  \lim_{u \to \infty}
  \E \biggl[ f_0 \biggl( \frac{X_{-s}}{\rho(X_0)}, \ldots, \frac{X_t}{\rho(X_0)} \biggr) \, \bigg| \, \rho(X_0)>u \biggr] \\
  = 
  \E \bigg[
    f_0 \biggl( \frac{\Theta_0}{\rho(\Theta_s)},\ldots,\frac{\Theta_{t+s}}{\rho(\Theta_s)} \biggr) \, 
    \rho(\Theta_s)^{\alpha} 
  \bigg].
\end{multline}
Fix $r > 0$ and split the integrand on the left-hand side into two parts, according to whether $\rho(X_{-s}) \le ru$ or $\rho(X_{-s}) > ru$. By the triangle inequality, equation~\eqref{eq:f0:limit} will be the consequence of the following three limits:
\begin{equation}
\label{eq:f0:limit:1}
  \lim_{r\downarrow 0} 
  \limsup_{u \to \infty}
  \E \biggl[ 
    \biggl\vert f_0 \biggl( \frac{X_{-s}}{\rho(X_0)}, \ldots, \frac{X_t}{\rho(X_0)} \biggr) \biggr\vert \, 
    \1\{\rho(X_{-s}) \le ru\} \, \bigg| \, 
    \rho(X_0)>u 
  \biggr] 
  = 0,
\end{equation}
\begin{multline}
\label{eq:f0:limit:2}
  \lim_{u \to \infty}
  \E \biggl[
    f_0 \biggl( \frac{X_{-s}}{\rho(X_0)}, \ldots, \frac{X_t}{\rho(X_0)} \biggr) \, 
    \1 \{\rho(X_{-s}) > ru\} \, \bigg| \, 
    \rho(X_0)>u 
  \biggr] \\
  = 
  \E \biggl[
    f_0 \bigg(\frac{\Theta_0}{\rho(\Theta_s)},\ldots,\frac{\Theta_{t+s}}{\rho(\Theta_s)} \bigg) 
    \min \{\rho(\Theta_s),r^{-1}\}^{\alpha}
  \biggr], 
\end{multline}
\begin{multline}
\label{eq:f0:limit:3}
  \lim_{r \downarrow 0}
  \E \biggl[
    f_0 \biggl(\frac{\Theta_0}{\rho(\Theta_s)},\ldots,\frac{\Theta_{t+s}}{\rho(\Theta_s)} \biggr) 
    \min \{\rho(\Theta_s),r^{-1}\}^{\alpha}
  \biggr] \\
  =
  \E \biggl[
    f_0 \biggl( \frac{\Theta_0}{\rho(\Theta_s)},\ldots,\frac{\Theta_{t+s}}{\rho(\Theta_s)} \biggr) \, 
    \rho(\Theta_s)^{\alpha} 
  \biggr].  
\end{multline}
We will show equations~\eqref{eq:f0:limit:1}, \eqref{eq:f0:limit:2}, and \eqref{eq:f0:limit:3}.

First we show \eqref{eq:f0:limit:1}. Recall that there exist positive scalars $(z_r)_{r>0}$ such that $\{x:\rho(x)<r\}\subset \{ x:d(x,0)<z_r\}$ for every $r>0$ and $\lim_{r \downarrow 0} z_r = 0$. By definition of $f_0$ in \eqref{eq:f0} and the fact that $f$ is Lipschitz continuous with some constant $L > 0$, we find that the expectation on the left-hand side in \eqref{eq:f0:limit:1} is bounded by $L z_r$. This converges to zero as $r \downarrow 0$.
\egroup


Next we show \eqref{eq:f0:limit:2}. Let $V(u)=\Pr[\rho(X_0)>u]$. By stationarity of $(X_t)_{t \in \mathbb{Z}}$, regular variation of $V$, and (iii$_+$), we have
\begin{align*}
&\E [f_0(X_{-s}/\rho(X_0),\ldots,X_t/\rho(X_0)) \, \1 (\rho(X_{-s})>ru)\mid
  \rho(X_0)>u]\\
&= \frac{V(ru)}{V(u)} \E
  [f_0(X_0/\rho(X_s),\ldots,X_{t+s}/\rho(X_s)) \, \1(\rho(X_s)>u)\mid
  \rho(X_0)>ru]\\
&\to r^{-\alpha}
  \E \bigg[f_0
  \bigg(\frac{\Theta_0}{\rho(\Theta_s)},\ldots,\frac{\Theta_{t+s}}{\rho(\Theta_s)}
  \bigg)\1 (rY\rho(\Theta_s)>1) \bigg]\,, \qquad u\to \infty\,.
\end{align*}
\bgroup
The passage to the limit is justified by (iii$_+$), the continuity of $Y$, and the independence of $Y$ and $(\Theta_t)_{t \in \mathbb{Z}_+}$. By Fubini's theorem, the expression on the right-hand side is equal to
\begin{align*}
  \lefteqn{
  r^{-\alpha}
  \int_1^\infty 
  \E \biggl[
    f_0 \biggl( \frac{\Theta_0}{\rho(\Theta_s)},\ldots,\frac{\Theta_{t+s}}{\rho(\Theta_s)} \biggr) \,
    \1 \{ rz\rho(\Theta_s)>1 \}
  \biggr]
  d(-z^{-\alpha})
  } \\
  &=
  \int_r^\infty
  \E \biggl[
    f_0 \biggl( \frac{\Theta_0}{\rho(\Theta_s)},\ldots,\frac{\Theta_{t+s}}{\rho(\Theta_s)} \biggr) \,
    \1 \{z\rho(\Theta_s)>1\}
  \biggr]
  d(-z^{-\alpha}) \\
  &=
  \E \biggl[
    f_0 \biggl( \frac{\Theta_0}{\rho(\Theta_s)},\ldots,\frac{\Theta_{t+s}}{\rho(\Theta_s)} \biggr) \,
    \int_r^\infty \1 \{z\rho(\Theta_s)>1\} \, d(-z^{-\alpha})
  \biggr] \\
  &=
  \E \biggl[
    f_0 \biggl( \frac{\Theta_0}{\rho(\Theta_s)},\ldots,\frac{\Theta_{t+s}}{\rho(\Theta_s)} \biggr) \,
    \min \{ \rho(\Theta_s), r^{-1} \}^{\alpha}
  \biggr]. 
\end{align*}
We arrive at \eqref{eq:f0:limit:2}.

Finally, the proof of \eqref{eq:f0:limit:3} is immediate in view of the dominated convergence theorem, the boundedness of $f$, and the integrability of $\rho(\Theta_s)^{\alpha}$, see \eqref{eq:claim1}.

We have now proven \eqref{eq:f0:limit:1}, \eqref{eq:f0:limit:2} and \eqref{eq:f0:limit:3} and thus \eqref{eq:f0:limit}. If the function $f$ is such that $f(\theta_{-s}, \ldots, \theta_t) = 0$ as soon as $\theta_{-s} = 0$, then $f = f_0$ and \eqref{E:TimeChange} follows. This finishes the proof of Lemma~\ref{CL:uniqueness}.
\egroup
\end{proof}

By Lemma~\ref{CL:uniqueness} and the tightness argument preceding it, condition~(iii$_+$) implies that the limit in distribution
\[
  \mathcal{L}\bigl( X_{-s} / \rho(X_0), \ldots, X_t / \rho(X_0) \mid \rho(X_0) > u \bigr)\,, \qquad u \to \infty\,,
\]
exists for all nonnegative integer $s$ and $t$. By the Daniell--Kolmogorov extension theorem \citep[Chapter~4, Theorem~53]{POL02}, these limits in distributions are the `finite-dimensional' distributions of a random element $(\Theta_t)_{t \in \ZZ}$ in the product space $S^\ZZ$. Statement (iii) concerning weak convergence in $S^\ZZ$ then follows from the convergence in the previous display for all $s$ and $t$ together with \citet[Theorem~1.4.8]{VDV96}.

It remains to show equation~\eqref{eq:timechange}. The weak convergence established in the previous paragraph together with Lemma~\ref{CL:uniqueness} imply that for bounded, Lipschitz continuous functions $f : S^{t+s+1} \to \reals$ vanishing on $\{ (\theta_{-s}, \ldots, \theta_t) \in S^{t+s+1} : \theta_{-s} = 0 \}$, we have
\begin{multline}
\label{E:TimeChange:2}
  \E[f(\Theta_{-s},\ldots,\Theta_t)]
  =
  \lim_{u \to \infty}
  \E \biggl[
    f \biggl( \frac{X_{-s}}{\rho(X_0)}, \ldots, \frac{X_t}{\rho(X_0)} \biggr) \, \bigg| \, 
    \rho(X_0)>u 
  \biggr] \\
  = 
  \E\biggl[
    f \bigg(\frac{\Theta_0}{\rho(\Theta_s)},\ldots,\frac{\Theta_{t+s}}{\rho(\Theta_s)} \bigg) \,
    \rho(\Theta_s)^{\alpha}
  \biggr].   
\end{multline}
Let $g : S^{t+s+1} \to \mathbb{R}$ be bounded and Lipschitz continuous. Write $g(\Theta_{-s}, \ldots, \Theta_t)$ as a telescoping sum of $s+1$ terms:
\begin{align*}
  g(\Theta_{-s}, \ldots, \Theta_t) 
  &= g(\Theta_{-s}, \ldots, \Theta_t) - g(0, \Theta_{-s+1}, \ldots, \Theta_t) \\
  &\quad \mbox{} + g(0, \Theta_{-s+1}, \ldots, \Theta_t) - g(0, 0, \Theta_{-s+2}, \ldots, \Theta_t) \\
  &\quad \mbox{} + \cdots \\
  &\quad \mbox{} + g(0, \ldots, 0, \Theta_{-1}, \ldots, \Theta_t) - g(0, \ldots, 0, \Theta_0, \ldots, \Theta_t) \\
  &\quad \mbox{} + g(0, \ldots, 0, \Theta_0, \ldots, \Theta_t).
\end{align*}
Take expectations on both sides and apply \eqref{E:TimeChange:2} to the first $s$ lines of the right-hand side of the previous display at $s$ replaced by $s, s-1, \ldots, 1$, respectively, to obtain
\begin{align}
\label{eq:timechange:noZero}
  \lefteqn{\E [ g(\Theta_{-s}, \ldots, \Theta_t) ]} \\
\nonumber
  &= \textstyle 
  \E \biggl[ \biggl\{ g \biggl( \frac{\Theta_0}{\rho(\Theta_s)}, \ldots, \frac{\Theta_{t+s}}{\rho(\Theta_s)} \biggr) 
    - g \biggl( 0, \frac{\Theta_1}{\rho(\Theta_s)}, \ldots, \frac{\Theta_{t+s}}{\rho(\Theta_s)} \biggr) \biggr\} \, \rho(\Theta_s)^\alpha \biggr] \\
\nonumber
  &\quad \textstyle \mbox{} + \E \biggl[ \biggl\{ g \biggl( 0, \frac{\Theta_0}{\rho(\Theta_{s-1})}, \ldots, \frac{\Theta_{t+s-1}}{\rho(\Theta_{s-1})} \biggr) 
    - g \biggl( 0, 0, \frac{\Theta_1}{\rho(\Theta_{s-1})}, \ldots, \frac{\Theta_{t+s-1}}{\rho(\Theta_{s-1})} \biggr) \biggr\} \, \rho(\Theta_{s-1})^\alpha \biggr] \\
\nonumber
  &\quad \mbox{} + \cdots \\
\nonumber
  &\quad \textstyle \mbox{} + \E \biggl[ \biggl\{ g \biggl( 0, \ldots, 0, \frac{\Theta_0}{\rho(\Theta_{1})}, \ldots, \frac{\Theta_{t+1}}{\rho(\Theta_{1})} \biggr) 
    - g \biggl( 0, \ldots, 0, \frac{\Theta_1}{\rho(\Theta_{1})}, \ldots, \frac{\Theta_{t+1}}{\rho(\Theta_{1})} \biggr) \biggr\} \, \rho(\Theta_{1})^\alpha \biggr] \\
\nonumber
  &\quad \textstyle \mbox{} + \E [ g(0, \ldots, 0, \Theta_0, \ldots, \Theta_t) ].
\end{align}
The equality in the preceding display being true for all bounded and Lipschitz continuous functions $g : S^{t+s+1} \to \mathbb{R}$, it must hold whenever $g$ is the indicator function of a closed set \citep[proof of Theorem~1.2]{Billingsley:1999uc} and then, by a standard argument, also for all measurable functions $S^{t+s+1} \to \mathbb{R}$ that are integrable with respect to the law of $(\Theta_{-s}, \ldots, \Theta_t)$. For such functions that vanish whenever their first argument is equal to zero, the formula in the preceding display simplifies to~\eqref{eq:timechange} again.

This concludes the proof of Theorem~\ref{thm:timechange}.
\end{proof}

\section{Discussion}
\label{sec:disc}

On $S = [0, \infty)^2$, the function $\rho(x, y) = \min(x, y)$ is \emph{not} a modulus, since condition~(iii) in Definition~\ref{def:radius} is not satisfied. 
Similarly, \citet{dombry+r:2015} consider `cost functionals' that satisfy conditions (i) and (ii) but not necessarily (iii) in Definition~\ref{def:radius}. Without the latter condition, however, regular variation in $S$ can no longer be characterized via a polar decomposition as in Proposition~\ref{prop:spec00}, since sets of the form $\{ x : \rho(x) < r \}$ do no longer form a neighbourhood base of the origin.

Relative to such `pseudo-moduli', \emph{hidden regular variation} \citep{RES02} on subcones may still occur. The notion of $M_0$-convergence then needs to be replaced by a more general one, involving sets that are bounded away from some `forbidden set' which may be larger than a singleton. In $S = [0, \infty)^2$, one could for instance exclude the union of the two coordinate axes. Such a concept of regular variation is relevant for stochastic volatility models, for example, which exhibit asymptotic independence and therefore trivial spectral tail processes in the sense of this paper \citep{MZ15, JD16}. A complication, however, is that the index of hidden regular variation may depend on the time lag \citep{kulik+s:2015}. A general treatment for such time series in metric spaces is an interesting research problem.

\appendix
\section{Convergence of measures}
\label{sec:m_0-convergence}

We consider a complete, separable metric space $(S,d)$ and some point $0 \in S$. For $A \subset S$, let $A^{\circ}$ and $A^-$ denote the interior and closure of $A$, respectively, and let $\partial A=A^-\setminus A^{\circ}$ be the boundary of $A$. Recall $B_{0, u} = \{ x \in S : d(x, 0) < u \}$ for $u > 0$ as well as the space $M_0(S)$ from Section~\ref{sec:rv}. Let $M_b(\mathcal{X})$ denote the set of finite Borel measures on some metric space $\mathcal{X}$ and define convergence of measures in $M_b(\mathcal{X})$ by the usual notion of weak convergence, i.e., convergence of integrals of bounded, continuous functions from $\mathcal{X}$ into $\RR$. We begin with a variation on Theorem~2.2 in \cite{HL06}.

\begin{lemma}
\label{lem:hh1}
\begin{enumerate}[(i)]
\item
Assume $\mu_n \to \mu$ in $M_0(S)$ as $n \to \infty$ and let $f : S_0 \to \RR$ be bounded, measurable, and vanish on $B_{0,u}$ for some $u > 0$. Let $D$ be the discontinuity set of $f$. If $\mu(D) = 0$, then $\int f \, d\mu_n \to \int f \, d\mu$ as $n \to \infty$. 
\item
If there exists a decreasing sequence of positive scalars $(r_i)_{i \in \NN}$ with $r_i\to 0$ as $i\to \infty$ such that for each $i$, there   exists a neighbourhood of the origin $0$, say $N_i$, such that $N_i\subset B_{0,r_i}$ and $\mu_n(\,\cdot\,\setminus N_i)\to \mu(\,\cdot\,\setminus N_i)$ in $M_b(S \setminus N_i)$, then $\mu_n \to \mu$ in $M_0(S)$ as $n \to \infty$. 
\end{enumerate}
\end{lemma}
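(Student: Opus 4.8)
The plan is to establish the two parts separately, and I would dispatch part~(ii) first, as it is essentially immediate. Fix $g \in \mathcal{C}_0$; by definition of $\mathcal{C}_0$ there is $r > 0$ with $g$ vanishing on $B_{0,r}$. Since $r_i \to 0$, choose $i$ with $r_i < r$, so that $N_i \subset B_{0,r_i} \subset B_{0,r}$ and therefore $g$ vanishes on $N_i$. As the measures put no mass at the origin and $g$ is zero on $N_i$, we get $\int g \, d\mu_n = \int_{S \setminus N_i} g \, d\mu_n$, and likewise for $\mu$. The restriction of $g$ to the subspace $S \setminus N_i$ is bounded and continuous, so the assumed convergence $\mu_n(\point \setminus N_i) \to \mu(\point \setminus N_i)$ in $M_b(S \setminus N_i)$ gives $\int_{S \setminus N_i} g \, d\mu_n \to \int_{S \setminus N_i} g \, d\mu$. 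Hence $\int g \, d\mu_n \to \int g \, d\mu$ for every $g \in \mathcal{C}_0$, which is exactly $M_0$-convergence. (A one-line preliminary remark records that $\mu$ and the $\mu_n$ lie in $M_0(S)$: given $\eps > 0$, pick $i$ with $r_i < \eps$, so that $\{ x : d(x,0) \ge \eps \} \subset S \setminus N_i$ has finite measure.)

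For part~(i) I would use a layer-cake representation together with the Portmanteau theorem for $M_0$-convergence \citep[Theorem~2.4]{HL06}. First reduce to nonnegative $f$ by writing $f = f^+ - f^-$: both $f^+$ and $f^-$ are bounded, measurable, vanish on $B_{0,u}$, and are continuous wherever $f$ is, so their discontinuity sets are contained in $D$ and hence $\mu$-null. Assuming $0 \le f \le M$, note that for each $t > 0$ the super-level set $\{ f > t \}$ lies in $\{ x : d(x,0) \ge u \}$ because $f$ vanishes on $B_{0,u}$; this set is bounded away from the origin and has finite $\mu_n$- and $\mu$-measure. Fubini then gives $\int f \, d\mu_n = \int_0^M \mu_n(\{ f > t \}) \, dt$ and similarly for $\mu$.

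The crux is to recognize almost every super-level set as a $\mu$-continuity set. The inclusion $\partial \{ f > t \} \subset D \cup \{ f = t \}$ holds at every point where $f$ is continuous with value $\ne t$; since $\mu(D) = 0$ and the disjoint sets $\{ f = t \}$, $t > 0$, all sit inside the finite-measure set $\{ x : d(x,0) \ge u \}$, only countably many of them have positive $\mu$-mass. Thus for Lebesgue-almost every $t \in (0, M)$ the set $\{ f > t \}$ is a $\mu$-continuity set bounded away from the origin, whence $\mu_n(\{ f > t \}) \to \mu(\{ f > t \})$ by Theorem~2.4 in \cite{HL06}. To pass the limit through the $dt$-integral I would fix a radius $w \in (0, u)$ with $\mu(\{ x : d(x,0) = w \}) = 0$ (only countably many radii can fail this), so that $\{ x : d(x,0) \ge w \}$ is a $\mu$-continuity set and $\sup_n \mu_n(\{ x : d(x,0) \ge w \}) < \infty$ by Theorem~2.4. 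Since $\{ f > t \} \subset \{ x : d(x,0) \ge w \}$ for all $t > 0$, the integrands are dominated by a constant on $(0,M)$, and dominated convergence yields $\int f \, d\mu_n \to \int f \, d\mu$.

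I expect the main obstacle to be exactly this last domination step, since it is where the non-finiteness of the $\mu_n$ near the origin bites: one cannot integrate the level sets without first confining all the relevant mass to a single set of finite measure carrying a bound uniform in $n$. The twin ingredients that make this work are the vanishing of $f$ on $B_{0,u}$ (which pushes every relevant level set away from the origin) and the choice of a continuity radius $w$ (which upgrades mere finiteness of each $\mu_n(\{ x : d(x,0)\ge w\})$ to a bound uniform in $n$, via Portmanteau). Everything else is bookkeeping: verifying that $f^\pm$ inherit the hypotheses, the boundary inclusion $\partial\{f>t\} \subset D \cup \{f = t\}$, and the co-countability of the bad radii and bad levels. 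An alternative to the layer-cake route would be to first deduce weak convergence of the restrictions $\mu_n|_{\{x : d(x,0) \ge w\}} \to \mu|_{\{x : d(x,0) \ge w\}}$ as finite measures and then invoke the classical mapping theorem for $\mu$-a.e.\ continuous bounded functions, but this merely repackages the same Portmanteau argument.
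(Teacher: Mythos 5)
Your proposal is correct, and while your part~(ii) coincides with the paper's own argument (choose $i$ with $r_i < r$ so the test function vanishes on $N_i$, then transfer the integrals to the restricted measures), your part~(i) takes a genuinely different route. The paper's proof of (i) is two sentences: fix a continuity radius $r \in (0,u)$ with $\mu(\partial B_{0,r}) = 0$, restrict $\mu_n$ and $\mu$ to $S \setminus B_{0,r}$, invoke (the proof of) Theorem~2.2 in \cite{HL06} to get weak convergence of these finite restrictions in $M_b(S \setminus B_{0,r})$, and conclude by the mapping theorem for weak convergence applied to the bounded, $\mu$-a.e.\ continuous function $f$ --- precisely the ``repackaged'' alternative you mention in your closing sentence. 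Your layer-cake argument replaces that machinery with elementary ingredients: Fubini on the super-level sets $\{f > t\}$, the boundary inclusion $\partial\{f>t\} \subset D \cup \{f=t\}$ together with the countability of the levels $t$ charged by $\mu$ (all level sets being disjoint subsets of the finite-measure set $\{x : d(x,0) \ge u\}$), the Portmanteau theorem for $M_0$-convergence \citep[Theorem~2.4]{HL06} applied to these continuity sets, and dominated convergence via the uniform bound $\sup_n \mu_n(\{x : d(x,0) \ge w\}) < \infty$ obtained from a continuity radius $w \in (0,u)$. Your identification of the domination step as the crux is apt: that is exactly where the infinite mass of the $\mu_n$ near the origin must be fenced off, and your choice of $w$ handles it correctly (note $w=u$ itself would not do, since $\{d(\cdot,0) \ge u\}$ need not be a $\mu$-continuity set). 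What your route buys is self-containedness --- only the set-wise Portmanteau statement is needed, not weak convergence of restrictions nor the a.e.-continuous mapping theorem; what the paper's route buys is brevity, delegating the analysis to existing results. Both rest on the same underlying fact that only countably many radii or levels can be charged by $\mu$.
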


\begin{proof}
(i) Let $r \in (0, u)$ be such that $\mu(\boundary B_{0,r}) = 0$. Let $\mu_n^{(r)}$ and $\mu_n^{(r)}$ denote the restrictions of $\mu_n$ and $\mu$ to $S \setminus B_{0,r}$, respectively. By (the proof of) Theorem~2.2 in \cite{HL06}, we have weak convergence $\mu_n^{(r)} \to \mu^{(r)}$ as $n \to \infty$ in the space $M_b(S \setminus B_{0,r})$. By the continuous mapping theorem for weak convergence of finite measures, $\int_{S_0} f \, d\mu_n = \int_{S \setminus B_{0,r}} f \, d\mu_n^{(r)} \to \int_{S \setminus B_{0,r}} f \, d\mu^{(r)} = \int_{S_0} f \, d\mu$ as $n \to \infty$. 

(ii) For any $f\in \mathcal{C}_0$, there exists $i \in \NN$ such that $f$ vanishes on $B_{0,r_i}$ and consequently on $N_i$. Since $\mu_n(\point \setminus N_i)\to \mu(\point \setminus N_i)$ in $M_b(S \setminus N_i)$, we have $\int_{S_0} f\, d\mu_n = \int_{S \setminus N_i}f\,d\mu_n \to \int_{S \setminus N_i} f \, d\mu = \int_{S_0} f \, d\mu$. Therefore, $\mu_n \to \mu$ in $M_0(S)$ as $n \to \infty$.
\end{proof}

The following lemma is useful for proving convergence in distribution. 

\begin{lemma}
\label{L:product}
Let $(S,d)$ be a separable metric space. Let $(X_n, Y_n)$ and $(X, Y)$ be random elements in $\RR \times S$. Then $(X_n, Y_n) \dto (X, Y)$ if and only if
\begin{equation}
\label{E:product}
  \E [ \1(X_n \le x) \, g(Y_n) ] \to \E [ \1(X \le x) \, g(Y) ] \qquad (n \to \infty)
\end{equation}
for every continuity point $x \in \RR$ of $X$ and every bounded and continuous function $g : S \to \RR$.
\end{lemma}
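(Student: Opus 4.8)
The plan is to treat the two implications separately, with the ``only if'' direction being routine and the ``if'' direction carrying the real content. Throughout I would lean on the portmanteau and mapping theorems and, at the end, on Prohorov's theorem.

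For the forward implication, suppose $(X_n, Y_n) \dto (X, Y)$. Fix a continuity point $x$ of the law of $X$ and a bounded continuous $g$. I would apply the mapping theorem: the function $h(s, y) = \1(s \le x) \, g(y)$ on $\RR \times S$ is bounded, and its set of discontinuities is contained in $\{ x \} \times S$, whose $\law(X,Y)$-measure equals $\Pr[X = x] = 0$ since $x$ is a continuity point of $X$. Hence $h(X_n, Y_n) \dto h(X, Y)$ \citep[Theorem~2.7]{Billingsley:1999uc}, and since $h$ is bounded this yields \eqref{E:product}.

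For the converse, assume \eqref{E:product}. First, taking $g \equiv 1$ gives $\Pr[X_n \le x] \to \Pr[X \le x]$ at every continuity point $x$, i.e.\ $X_n \dto X$; in particular $\{X_n\}$ is tight. The next step is to upgrade \eqref{E:product} from indicators to arbitrary bounded continuous functions of the first coordinate: for $\phi \in C_b(\RR)$ and $g \in C_b(S)$, I claim $\E[\phi(X_n) g(Y_n)] \to \E[\phi(X) g(Y)]$. To see this, fix $\eps > 0$, choose continuity points $\pm A$ of $X$ carrying all but $\eps$ of the mass of $X$ (hence, by tightness, of $X_n$ for large $n$), and approximate $\phi$ uniformly on $[-A, A]$ by a step function $\psi$ with continuity-point breakpoints, vanishing off $[-A, A]$. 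Then $\E[\psi(X_n) g(Y_n)] \to \E[\psi(X) g(Y)]$ by linearity of \eqref{E:product}, while the errors $\E[\,|\phi - \psi|(X_n)\,|g(Y_n)|\,]$ are bounded by a constant multiple of $\eps$ uniformly in large $n$; letting $\eps \downarrow 0$ gives the claim. Taking now $\phi \equiv 1$ shows $\E[g(Y_n)] \to \E[g(Y)]$ for all $g \in C_b(S)$, i.e.\ $Y_n \dto Y$, so $\{Y_n\}$ is tight as well. Since both marginal families are tight, the joint family $\{(X_n, Y_n)\}$ is tight, hence relatively compact by Prohorov's theorem \citep[Theorems~5.1 and~5.2]{Billingsley:1999uc}. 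Let $\law(X', Y')$ be any subsequential limit; along that subsequence the previous paragraph forces $\E[\phi(X') g(Y')] = \E[\phi(X) g(Y)]$ for all $\phi \in C_b(\RR)$ and $g \in C_b(S)$. These identities determine the law: fixing $g \ge 0$ and varying $\phi$ shows the finite measures $B \mapsto \E[\1_B(X') g(Y')]$ and $B \mapsto \E[\1_B(X) g(Y)]$ on $\RR$ agree (bounded continuous functions determine finite Borel measures on a metric space); then fixing $B$ and varying $g$ shows agreement on $S$, so the joint laws coincide on the $\pi$-system of measurable rectangles and hence everywhere. Thus every subsequential limit equals $\law(X,Y)$, and $(X_n, Y_n) \dto (X, Y)$ follows.

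The main obstacle is precisely this last bridge in the ``if'' direction: the hypothesis controls only test functions that are \emph{indicators} in the first coordinate times \emph{continuous} functions in the second, whereas joint weak convergence demands arbitrary bounded continuous test functions on the product. Overcoming it requires two moves — smoothing the indicator (the step-function approximation, where uniform control of the $X$-tails supplied by marginal tightness is essential) and showing the resulting product class is rich enough to pin down the joint law (Prohorov plus the rectangle $\pi$-system identification). Everything else is bookkeeping with the portmanteau and mapping theorems.
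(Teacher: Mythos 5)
Your proof is correct and follows essentially the same route as the paper's: the continuous mapping theorem for the ``only if'' part, and for the ``if'' part marginal convergence of $X_n$, joint tightness, Prohorov's theorem, and identification of subsequential limits through a step-function approximation with breakpoints at continuity points of $X$. The only differences are cosmetic: you obtain $Y_n \dto Y$ by upgrading \eqref{E:product} to bounded continuous $\phi$ and taking $\phi \equiv 1$ (the paper instead lets $x$ grow large), and you verify by hand, via the rectangle $\pi$-system, that product test functions determine the joint law, where the paper simply cites Lemma~1.4.2 of \citet{VDV96}.
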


\begin{proof}
The `only if' part is a special case of the continuous mapping theorem. So assume \eqref{E:product} holds. Taking $g \equiv 1$ yields $X_n \dto X$. Taking $x$ arbitrarily large so that $\Pr[X > x]$ is arbitrarily small yields $Y_n \dto Y$. As a consequence, the sequence $(X_n, Y_n)$ is tight. It remains to show that the joint distribution of $(X, Y)$ is determined by expectations as in the right-hand side \eqref{E:product}. By Lemma~1.4.2 in \citet{VDV96}, the joint distribution of $(X, Y)$ is determined by expectations of the form $\E [ f(X) \, g(Y) ]$ with $f : \RR \to \RR$ and $g : S \to \RR$ nonnegative, Lipschitz continuous, and bounded. It then suffices to write $f$ as the limit of an increasing sequence of step functions whose jump locations are continuity points of $X$.
\end{proof}

The following theorem is similar to Theorems~2.2 and~2.3 in \cite{Billingsley:1999uc} and provides a criterion for convergence in $M_0(S)$.

\begin{theorem}
\label{thm:hh1}
Suppose that $\mathcal{A}$ is a $\pi$-system on $S$ satisfying the following two conditions:
\begin{enumerate}[({C}1)]
\item
There exists a decreasing sequence $(r_i)_{i \in \NN}$ of positive scalars with $r_i \to 0$ as $i\to \infty$ such that for each $i$, there exists a neighbourhood of the point $0$, say $N_i$, such that $N_i \subset B_{0,r_i}$ and $A \setminus N_i \in \mathcal{A}$ for all $A \in \mathcal{A}$. 
\item
Each open subset $G$ of $S$ with $0 \notin G^-$ is a countable union of $\mathcal{A}$-sets.
\end{enumerate}
If $\mu_n(A) \to \mu(A)$ as $n \to \infty$, for all $A$ in $\mathcal{A}$, then $\mu_n \to \mu$ in $M_0(S)$ as $n \to \infty$.
\end{theorem}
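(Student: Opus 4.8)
The plan is to reduce the assertion to a statement about finite measures via Lemma~\ref{lem:hh1}(ii). Using the scalars $(r_i)$ and the neighbourhoods $(N_i)$ furnished by~(C1), it suffices to fix $i$ and show that $\mu_n(\point \setminus N_i) \to \mu(\point \setminus N_i)$ weakly in $M_b(S \setminus N_i)$. Since $N_i$ is a neighbourhood of $0$ contained in $B_{0,r_i}$, the set $S_i := S \setminus N_i$ is bounded away from the origin, say $S_i \subseteq \{x : d(x,0) \ge \delta_i\}$ for some $\delta_i > 0$; in particular the restrictions of $\mu$ and $\mu_n$ to $S_i$ are \emph{finite} Borel measures, because $\mu, \mu_n \in M_0(S)$. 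Thus everything comes down to weak convergence of finite measures on the separable metric space $S_i$, which I would establish by a $\pi$-system argument in the spirit of the cited Theorems~2.2 and~2.3 of \citet{Billingsley:1999uc}.

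On $S_i$ I would use the trace class $\mathcal{A}_i = \{A \setminus N_i : A \in \mathcal{A}\}$. By~(C1) each of its members already belongs to $\mathcal{A}$, so $\mu_n \to \mu$ on $\mathcal{A}_i$ by hypothesis, and $\mathcal{A}_i$ is a $\pi$-system on $S_i$ since $(A \setminus N_i) \cap (A' \setminus N_i) = (A \cap A') \setminus N_i$. The point of~(C2) is that every relatively open $W \subseteq S_i$ is a countable union of $\mathcal{A}_i$-sets: writing $W = G \cap S_i$ with $G$ open in $S$ and passing to $G' = G \cap \{x : d(x,0) > \delta_i/2\}$, which satisfies $G' \cap S_i = W$ and $0 \notin (G')^-$, condition~(C2) gives $G' = \bigcup_k A_k$ with $A_k \in \mathcal{A}$, whence $W = \bigcup_k (A_k \setminus N_i)$.

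The engine is the Portmanteau lower bound. For relatively open $W = \bigcup_k B_k$ with $B_k \in \mathcal{A}_i$, each finite union $\bigcup_{k \le m} B_k$ has its measure expressed by inclusion--exclusion through the intersections $B_{k_1} \cap \cdots \cap B_{k_j}$, which lie in the $\pi$-system $\mathcal{A}_i$ and on which $\mu_n \to \mu$; hence $\mu_n(\bigcup_{k\le m} B_k) \to \mu(\bigcup_{k\le m} B_k)$, and letting $n \to \infty$ and then $m \to \infty$ yields $\liminf_n \mu_n(W) \ge \mu(W)$. Taking $W = S_i$ (which is relatively open in itself) already gives $\liminf_n \mu_n(S_i) \ge \mu(S_i)$. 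Once the total masses are shown to converge, this lower bound for all relatively open $W$ upgrades to full weak convergence on $S_i$ by the finite-measure Portmanteau theorem, and Lemma~\ref{lem:hh1}(ii) then delivers $\mu_n \to \mu$ in $M_0(S)$.

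The hard part is the remaining inequality $\limsup_n \mu_n(S_i) \le \mu(S_i)$, i.e.\ the convergence of the total masses. The inclusion--exclusion machinery only ever produces lower bounds, and because an $M_0$-measure may carry infinite mass there is no complementation against the ambient space to convert an open-set bound into a closed-set bound. What is really needed is that no mass of $\mu_n$ escapes within $S_i$; I would extract this from the two-sided convergence $\mu_n \to \mu$ on $\mathcal{A}$ together with the fact, guaranteed by~(C1) and~(C2), that $S_i = S \setminus N_i$ is exhausted cofinally by $\mathcal{A}$-sets on which the masses converge exactly. Pinning down this no-escape property, rather than the routine $\pi$-system bookkeeping, is where the substance of the proof resides.
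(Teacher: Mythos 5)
You follow the paper's own route essentially verbatim through the lower bound: the reduction via Lemma~\ref{lem:hh1}(ii), the trace $\pi$-system on $S_i = S \setminus N_i$, and the inclusion--exclusion argument yielding $\liminf_{n} \mu_n(W) \ge \mu(W)$ for every relatively open $W \subseteq S_i$ are exactly the steps of the published proof. The gap is the one you flag yourself and then leave open: the total-mass convergence $\mu_n(S_i) \to \mu(S_i)$, equivalently the limsup bound over relatively closed sets. The repair you sketch---that $S_i$ is ``exhausted cofinally by $\mathcal{A}$-sets on which the masses converge exactly''---cannot work, because exact convergence on every finite stage of a countable exhaustion is compatible with mass of $\mu_n$ escaping to infinity inside $S_i$. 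Concretely: let $S = \RR$ with origin $0$, let $\mathcal{A}$ consist of $\varnothing$ and all bounded open intervals whose closure avoids $0$, let $r_i = 2/i$ and $N_i = [-1/i, 1/i]$, and let $\mu_n = \delta_1 + \delta_n$, $\mu = \delta_1$, where $\delta_x$ denotes the Dirac mass at $x$. Then $\mathcal{A}$ is a $\pi$-system satisfying (C1) and (C2), all measures lie in $M_0(\RR)$, and $\mu_n(A) = \mu(A)$ for each fixed $A \in \mathcal{A}$ once $n$ is large; yet for $f \in \mathcal{C}_0$ with $f \equiv 1$ outside $B_{0,1}$ one has $\int f \, d\mu_n = 2 \not\to 1 = \int f \, d\mu$, so $\mu_n \not\to \mu$ in $M_0(\RR)$. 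Hence under (C1)--(C2) alone the ``no-escape property'' you are after is simply not available: no bookkeeping can close your gap, and the statement needs a stronger hypothesis.

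You should also know that your diagnosis of where ``the substance resides'' is sharper than the paper's own treatment. The published proof establishes the same liminf bound and then concludes by invoking the Portmanteau theorem of \citet[Theorem~2.1]{Billingsley:1999uc}; that theorem concerns probability measures, for which the open-set liminf criterion alone is equivalent to weak convergence, but for finite measures with possibly different total masses it is not (take $\mu_n = 2\mu$), and nowhere does the paper verify $\mu_n(S_i) \to \mu(S_i)$. So the paper's proof has precisely the gap you isolated. The natural fix is to strengthen (C1) so that, for each $i$, the set $S \setminus N_i$ is itself an $\mathcal{A}$-set (or a finite union of $\mathcal{A}$-sets). Then mass convergence on $S_i$ is part of the hypothesis, and the finite-measure Portmanteau theorem applies: for relatively closed $F \subseteq S_i$, with $W = S_i \setminus F$,
\[
  \limsup_{n \to \infty} \mu_n(F)
  \le \lim_{n \to \infty} \mu_n(S_i) - \liminf_{n \to \infty} \mu_n(W)
  \le \mu(S_i) - \mu(W) = \mu(F).
\]
This strengthened condition does hold in the paper's application in the proof of Theorem~\ref{thm:main}: there $S^\ZZ \setminus N_i = Q_i^{-1}\bigl(S^{2i+1} \setminus A_i^{(i)}\bigr)$, which belongs to the class $\mathcal{A}$ used there, since $S^{2i+1} \setminus A_i^{(i)}$ is open, $\mu^{(i)}$-smooth, and its closure avoids $\mathbf{0}^{(i)}$. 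With that amendment both your argument and the paper's become complete.
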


\begin{proof}
Let $i \in \NN$; by Lemma~\ref{lem:hh1}, it is sufficient to show that $\mu_n( \point \setminus N_i ) \to \mu( \point \setminus N_i ) $ in $M_b(S \setminus N_i)$ as $n \to \infty$. To do so, we apply the Portmanteau theorem for weak convergence of finite measures \citep[Theorem~2.1]{Billingsley:1999uc}. Any open subset of $S \setminus N_i$ can be written as $G \setminus N_i$ where $G \subset S$ is open and $0 \notin G^-$; we need to show that $\liminf_{n \to \infty} \mu_n( G \setminus N_i ) \ge \mu( G \setminus N_i)$. Let $A_1, A_2, \ldots$ be a sequence in $\mathcal{A}$ such that $G = \bigcup_{j \ge 1} A_j$. Write $A_{j,i} = A_j \setminus N_i \in \mathcal{A}$. Since $\mathcal{A}$ is a $\pi$-system and by the condition that $\lim_{n \to \infty} \mu_n(A) = \mu(A)$ for every $A \in \mathcal{A}$, we find, in view of the inclusion-exclusion formula, $\lim_{n \to \infty} \mu_n(\bigcup_{j=1}^k A_{j,i}) = \mu( \bigcup_{j=1}^k A_{j,i} )$ for every integer $k \ge 1$. Let $\varepsilon > 0$. Since $G \setminus N_i = \bigcup_{j \ge 1} A_{j,i}$ and since $\mu(G \setminus N_i) < \infty$, we can find $k$ large enough such that $\mu(G \setminus N_i) \le \mu( \bigcup_{j = 1}^k A_{j,i} ) + \varepsilon$. But $\mu( \bigcup_{j=1}^k A_{j,i} ) = \lim_{n \to \infty} \mu_n( \bigcup_{j=1}^k A_{j,i} )$ is bounded by $\liminf_{n \to \infty} \mu_n( G \setminus N_i )$, as required.
\end{proof}

\begin{acknowledgements}
We thank two reviewers for constructive comments on an earlier version of this paper, pointing out historic references and suggesting various ways to shorten and clarify the paper. In particular, one referee suggested the equivalence of joint regular variation of a time series (i.e., regular variation via finite stretches) with regular variation of the series as a random object in a sequence space. This suggestion eventually led to Theorem~\ref{thm:main}.
\end{acknowledgements}

\bibliographystyle{spbasic}      
\bibliography{B-RV}   


\end{document}